\documentclass[a4paper,12pt]{amsart}

\def\g{\gamma}

\def\e{\epsilon}

\def\o{\omega}


\usepackage{enumerate}
\usepackage{amssymb} 
\usepackage{latexsym} 
\usepackage{amsfonts} 
\usepackage{amsmath} 
\usepackage{eucal} 
\usepackage{bm} 
\usepackage{bbm} 
\usepackage{graphicx} 
\usepackage[english]{varioref} 
\usepackage[nice]{nicefrac} 
\usepackage[all]{xy}


\usepackage{amsthm}


\theoremstyle{plain}
\newtheorem{thm}{Theorem}[section] 
\newtheorem*{thm*}{Theorem} 
 \newtheorem{prop}[thm]{Proposition}
 \newtheorem{lem}[thm]{Lemma}
 
 \newtheorem{remark}[thm]{Remark}

\theoremstyle{definition}

\newtheorem{defn}[thm]{Definition}
\newtheorem{example}[thm]{Example}

\theoremstyle{remark}

\newtheorem*{remark*}{Remark}
\newtheorem*{claim*}{Claim}

\begin{document}

\author{Jesper Funch Thomsen}
\address{Institut for matematiske fag\\ Aarhus Universitet\\ 8000 \AA rhus C,
Denmark} \email{funch@imf.au.dk}

\title[]{A proof of Wahl's conjecture in the symplectic case}

\begin{abstract}
Let $X$ denote a flag variety of type $A$ or type $C$. We construct 
a canonical Frobenius splitting of $X \times X$ which vanishes with
maximal multiplicty along the diagonal. This way we verify
 a conjecture by Lakshmibai, Mehta and Parameswaran 
\cite{LakshmibaiMehtaParameswaran1998} in type $C$, and obtain
a new proof in type $A$. In particular, we 
obtain a proof of Wahl's conjecture in type $C$, and a new proof 
in type $A$. We also present certain cohomological consequences. 
\end{abstract}

\maketitle

\section{Introduction}

Let $X= \nicefrac{G}{P}$ denote a generalized  flag variety over an algebraically closed 
field $k$. Let $\mathcal I$ denote the sheaf of ideals defining the diagonal subvariety
$\Delta(X)$ of $X \times X$. The sheaf of differential 1-forms $\Omega_X$ 
on $X$ 
then equals the  quotient  $\nicefrac{\mathcal I}{\mathcal I^2}$ 
and consequently we have a surjective map 
$$ \mathcal I \rightarrow \Delta_*( \Omega_X),$$
of sheaves on $X \times X$. Whenever $\mathcal L_1$ and 
$\mathcal L_2$ denote locally free sheaves of rank $1$ on $X$ 
we this way obtain an induced map
\begin{equation}
\label{wahl}
{\rm H}^0\big(X \times X, \mathcal I \otimes (\mathcal L_1
\boxtimes \mathcal L_2) \big) \rightarrow 
{\rm H}^0\big(X , \Omega_X \otimes \mathcal L_1
\otimes \mathcal L_2 \big).
\end{equation}
In \cite{Wahl1991} J. Wahl conjectured that the map 
(\ref{wahl}) is surjective whenever $\mathcal L_1$ and
$\mathcal L_2$ are ample. In characteristic zero (which 
is actually the setup in \cite{Wahl1991}) this conjecture
has been proved by S. Kumar in \cite{Kumar1992} 
using a case by case approach. At the same time Kumar
remarked that a unified approach would be more satisfactory 
and suggestion the notion of Frobenius splitting as a feasible 
tool. This suggestion was later studied in \cite{LakshmibaiMehtaParameswaran1998}.

In \cite{LakshmibaiMehtaParameswaran1998} the characteristic
$p>0$ version of Wahl's conjecture was studied. Here it was 
observed that Wahl's conjecture would follow if $X \times X$ admits
a Frobenius splitting vanishing with maximal multiplicity along the
diagonal. The latter statement was proved to be equivalent to the
existence of a Frobenius splitting of the blow-up of $X \times X$ 
at $\Delta(X)$ which is compatible with the exceptional locus;
i.e. with the projectivized  tangent bundle to $X$. This statement was then 
conjectured (from now on called the LMP-conjecture) to be 
satisfied for generalized flag varieties.

As a first approximation to the LMP-conjecture the paper 
\cite{LakshmibaiMehtaParameswaran1998} gave a construction
of a Frobenius splitting of $X$ vanishing with maximal multiplicity 
along a point. The LMP-conjecture itself was only handled for 
$\nicefrac{{\rm SL}_n(k)}{P}$ when $n \leq 6$. Subsequently the 
LMP-conjecture was verified for  Grassmannians by
Mehta and Parameswaran \cite{MehtaParameswaran1997}, for symplectic
and orthogonal Grassmannians by Lakshmibai, Raghavan and Sankaran
\cite{lakshmibaietal2009} and by Brown and Lakshmibai for minuscule
$\nicefrac{G}{P}$ \cite{BrownLakshmibai2008}. In all these cases the 
proof relied on the fact that a certain "canonical" and well studied 
Frobenius splitting of $X \times X$ vanished with maximal multiplicty 
along the diagonal. At the same time it was observed that this "canonical"
Frobenius splitting would not work in general. Recently Lauritzen and 
Thomsen \cite{LT} introduced a new Frobenius splitting of $X \times X$,
for $X= \nicefrac{{\rm SL}_n(k)}{B}$, with the desired vanishing property along 
the diagonal. As a consequence the LMP-conjecture is now verified for any
generalized flag variety of the form $\nicefrac{{\rm SL}_n(k)}{P}$. The 
paper [loc.cit] also verified the natural generalization of the LMP-conjecture 
for arbitrary  Kempf varieties in  $\nicefrac{{\rm SL}_n(k)}{B}$. 

In the present paper we obtain the Frobenius splitting from \cite{LT}
in a new way. This reveals that the Frobenius splitting of [loc.cit] is 
actually a canonical Frobenius splitting. Moreover, it also reveals its 
relation to the above mentioned Frobenius splitting of $\nicefrac{{\rm SL}_n(k)}{B}$ 
vanishing with maximal multiplicty along the point $eB$
 \cite{LakshmibaiMehtaParameswaran1998}. It turns out 
that this approach generalizes naturally to the case 
$X=\nicefrac{{\rm Sp}_{2m}(k)}{P}$ and thus we also obtain
a proof of the LMP-conjecture in the symplectic case . The
latter should be considered as the main result of this paper. 
Actually, to some extend our approach also generalizes to
the orthogonal case, but for some mysteries reasons  
certain root combinatorics prevents us from concluding 
the LMP-conjecture in this case.

\section{notation}

\subsection{Varieties}

We work over a fixed algebraically closed field $k$ of positive 
characteristic $p>0$. By a variety we mean a reduced 
scheme of finite type over $k$; in particular, a variety need
not be irreducible.  The ring of global regular functions on $X$ 
is denoted by $k[X]$.

\subsection{Group setup}

We fix an integer $n$ and let $G={\rm SL}_n(k)$ denote the group of 
$n \times n$-matrices of determinant $1$. By $B$ we  
denote the Borel subgroup of $G$ consisting of upper triangular 
matrices and let $T \subset B$ denote the maximal torus of 
diagonal matrices.
The set of upper (resp. lower) triangular unipotent matrices in 
$G$ is denoted by $U$ (resp. $U^-$). By $I_n$ we denote 
the identity matrix in $G$.

\subsection{Characters and roots}

The set $X^*(T)$ of $T$-characters is a free abelian 
group of rank $n-1$. If we, for $i=1, \dots, n$, let 
$\epsilon_i$ denote the $T$-character which picks 
out the $i$-th diagonal element, then the set of 
elements $\e_1,\dots, \e_{n-1}$ form a basis for the 
group $X^*(T)$. The set of roots is the 
set of $T$-characters $\epsilon_i-\e_j$, for $i \neq j$. 
A root $\epsilon_i-\e_j$ is said to positive if $i <j$, 
while a positive root is said to be simple if it equals 
$\e_i -\e_{i+1}$ for some $i=1, \dots, n-1$. A 
$T$-character $\lambda$ is said to be dominant if
$$\lambda = \sum_{i=1}^{n-1} \lambda_i \e_i,$$
with coefficients satisfying
$$ \lambda_1 \geq \lambda_2 \geq \cdots 
\geq \lambda_{n-1} \geq 0.$$
The fundamental characters $\omega_1, \dots, 
\omega_{n-1}$ are then the dominant elements
$$ \omega_i = \sum_{s=1}^i \e_s.$$

\subsection{Induced modules}
 
The group of $B$-characters will be denoted by $X^*(B)$ 
and will be identified with the group of $T$-characters 
$X^*(T)$. Any $B$-character $\lambda$ defines an {induced} $G$-representation 
$$ {\rm Ind}_B^G(\lambda) = \{  f :  G \rightarrow k :  f(gb) = 
\lambda(b)^{-1} f(g), ~~ {\text{for} }~~b \in B , ~g \in G \}.$$
The vectorspace structure of ${\rm Ind}_B^G(\lambda)$ is defined in 
the obvious way while the action of $G$ is defined through left-translation.
To simplify notation
we will also use the notation ${\rm H}^0 (\lambda)$ for 
the $G$-module ${\rm Ind}_B^G(-\lambda)$.
Then 
${\rm H}^0 (\lambda)$ is nonzero if and only if $\lambda$ is
dominant. Moreover, in this case ${\rm H}^0 (\lambda)$
contains a unique $U^-$-invariant line which is called the 
lowest weight space of $ {\rm H}^0 (\lambda)$. The 
$T$-character associated to the lowest weight space equals 
$-\lambda$ . The $B$-representation associated to $-\lambda$
is denoted by $k_{-\lambda}$ and is related to ${\rm H}^0 (\lambda)$
under the $B$-equivariant projection map 
$$ {\rm p}_{\lambda} :  {\rm H}^0 (\lambda) \rightarrow k_{-\lambda},$$
$$f \mapsto f({I_n}).$$
When $\lambda$ and $\mu$ are both $B$-characters 
there is a natural multiplication map 
$$ {\rm H}^0 (\lambda) \otimes {\rm H}^0 (\mu)
\rightarrow {\rm H}^0 (\lambda + \mu).$$
It is a central fact (see e.g \cite[Thm.3.1.2]{BrionKumar2005}) that this map is surjective
in case $\lambda$ and $\mu
$ are both dominant.

\subsection{The fundamental representations}
\label{fund}

The modules ${\rm H}^0(\omega_i)$ induced by the 
fundamental characters $\omega_i$, $i=1,\dots,n-1$,
are called fundamental representations of $G$. 
The fundamental representations are simple $G$-modules. 
Moreover, ${\rm H}^0(\omega_{n-i})$ is the module
dual to   ${\rm H}^0(\omega_i)$.
For a 
fixed integer $i$ and a collection ${\bf a} = 
(a_1, a_2, \dots, a_i)$ of increasing integers
$$  1 < a_1 < a_2 < \cdots < a_i \leq n,$$
we let 
$$ v_{ \bf a} : G \rightarrow k,$$
denote the map which takes a matrix $g$ in $G$ to the minor 
determined by the row numbers $a_1,a_2,
\dots, a_i$ and the column numbers $1,2,\dots, i$. Then 
$v_{\bf a}$ is easily seen to be an element of 
${\rm H}^0(\omega_i)$. In fact, the collection 
of elements 
$v_{\bf a}$ form a basis for the fundamental 
representation ${\rm H}^0(\omega_i)$ (cf. 
(\ref{iso1}) below).
In this notation the lowest
weight space of ${\rm H}^0(\omega_i)$ is generated 
by $ v_{(1,2,\dots,i)}$.

\subsection{Exterior products}
\label{ext}
Let $V=k^n$ denote the standard representation 
of $G$ and let $V^*$ denote its dual. We let 
$e_1, \dots, e_n$ denote the standard basis of 
the vector space $V=k^n$, and let $e_1^*,
\dots, e_n^*$ denote the dual basis of 
 $V^*$. As a basis for the $i$-th exterior products 
$\wedge^i V$ we  choose the set of elements 
$$ e_{\bf a} = e_{a_1}
\wedge \cdots \wedge e_{a_i}
\in \wedge^i V,$$
where ${\bf a}=(a_1, \dots,a_i)$ denotes 
 an increasing sequence 
$$1 \leq a_1 < a_2 < \cdots < a_i \leq n,$$
of integers. Similarly we consider 
$$ e_{\bf a}^* =  e_{a_1}^*
\wedge \cdots \wedge e_{a_i}^*
\in \wedge^i V^*,$$
as a basis of $\wedge^i V^*$. We consider 
$\wedge^i V^* $ and  $\wedge^i V $ 
as dual vectorspaces 
by applying the map 
\begin{equation}
\label{dual0}
\wedge^i V^* \otimes \wedge^i V 
\rightarrow k,
\end{equation}
$$ (f_1 \wedge \cdots \wedge f_i) \otimes 
(v_1 \wedge \cdots \wedge v_i) \rightarrow 
{\rm det}(f_s(v_t))_{1 \leq s,t \leq i},$$
In particular,  the basis  $ \{ e_{\bf a}^* \}$ 
of $ \wedge^i V^*$ is dual to the 
basis $ \{ e_{\bf a} \}$ of  $ \wedge^i V$.
We then identify ${\rm H}^0(\omega_i)$ with 
$ \wedge^i V$ using the $G$-equivariant 
isomorphism
\begin{equation} 
\label{iso1}
\wedge^i V^* \xrightarrow{\simeq} {\rm H}^0(\omega_i),
\end{equation}
defined by 
$$ \mathfrak{v} \mapsto \big( g \mapsto {\mathfrak v} (g 
 e_1 \wedge \cdots \wedge e_i) \big).$$
In particular, we identify $e_{\bf a}^*$ with $v_{\bf a}$.

\subsection{The symplectic group}

When $n=2m$ is even we
let $\overline{G}={\rm Sp}_{2m}(k)$ denote the symplectic 
group of rank $m$. We consider ${\overline G}$ as the 
subgroup of elements $g$ in $G={\rm SL}_{n}(k)$ which 
satisfies the relation 
\begin{equation}
\label{rel1} 
g^T A g=A,
\end{equation} 
where $A$ denotes the matrix
$$A= \begin{pmatrix}
0 & {I_m} \\
-{I_m} & 0 \\
\end{pmatrix},
$$
and ${I_m}$ denotes the identity matrix of size $m$.
The Borel subgroup of ${\overline{G}}$ consisting of 
the upper triangular matrices contained in $\overline G$
 will be denoted by $\overline{B}$ while the
unipotent part of $\overline{B}$ will be denoted by
$\overline{U}$. Moreover, we let $\overline{U}^-$ 
denote the set of lower triangular unipotent 
elements in $\overline{G}$. Finally we let $\overline{T}$ denote
the maximal torus in ${\overline{G}}$ consisting of the set of diagonal 
matrices within ${\overline{G}}$. 

\subsection{Symplectic weights, roots and fundamental characters}
\label{restrict2} 

When $\lambda$ is a $T$-character we let $\overline{\lambda}$
denote its restriction to $\overline{T}$.   The group ${\overline{T}}$ 
consists of the set of diagonal matrices in $G$ satisfying that the 
$i$-th diagonal entry, for   $i=1,\dots, n$,  is the inverse of the 
$(n+1-i)$-th diagonal entry.  In particular, $\overline \e_1,
\overline \e_2 ,\dots, \overline \e_m$  form a basis for the group
of characters $X^*(\overline T)$ of $\overline T$. Moreover, any 
root $\epsilon_i - \epsilon_j$ of $G$ restricts to a root
$\overline \e_i - \overline \e_j$ of $\overline G$; in particular 
this restriction is nonzero. The fundamental characters of $\overline G$  are the restrictions
 $\overline{\omega}_i$, $i=1, \dots,m$, of the first $m$ fundamental 
characters for ${G}$.  

We identify the characters of $\overline B$ with the characters of 
$\overline T$. Thus any $\overline T$-character $\overline \lambda$ 
defines an induced $\overline G$-module  :
\begin{equation}
\label{induced}
{\rm Ind}_{\overline{B}}^{\overline{G}}(\overline{\lambda})
= \{  f :  \overline{G} \rightarrow k :  f(gb) = 
\overline \lambda(b)^{-1} f(g), ~~ {\text{for} }~~b \in \overline{B} , ~g \in 
\overline{G} \},
\end{equation}
as in the case of $G$. We also use the short notation 
${\rm H}^0 (-\overline{\lambda})$
for the $\overline{G}$-module in  (\ref{induced}). 
Then any $T$-character $\lambda$ defines  
a $\overline{G}$-equivariant restriction map
\begin{equation} 
\label{RES}
{\rm H}^0(\lambda) \rightarrow
{\rm H}^0 (\overline{\lambda}).
\end{equation}
As (positive) roots of $G$ restricts to (positive) roots
of $\overline G$ the morphism (\ref{RES})  will map 
the lowest weight space (as a $G$-module)
of ${\rm H}^0(\lambda)$ isomorphically to the 
lowest weight space of ${\rm H}^0 (\overline{\lambda})$.
In fact,  any $\overline T$-weight $\overline{\mu}$ of 
$ {\rm H}^0(\lambda)$ distinct from $-\overline{\lambda}$
must satisfy 
that $\overline{\mu}+ \overline{\lambda}$ is a nonzero 
sum of positive roots of $\overline G$. 

\subsection{The Steinberg module}

Let $\rho = \omega_1 + \cdots + \omega_{n-1}$ denote the sum of
the fundamental characters of $G$. The  Steinberg module 
${\rm St}$ is by definition the $G$-module ${\rm H}^0 \big((p-1) \rho \big)$. The 
Steinberg module is an irreducible and self-dual $G$-module. Similarly
we may  define the Steinberg module in the symplectic case to 
be the $\overline{G}$-module $\overline{\rm St}$ induced from the
$(p-1)$-th multiple of the sum 
$ \overline \omega_1 + \cdots +\overline \omega_m,$
of the fundamental characters of $\overline{G}$. Also
in this case ${\overline {\rm St}}$ will be an irreducible and selfdual $\overline G$-module.

\subsection{The dualizing sheaf}
\label{dualizing}

The dualizing sheaf $\omega_{\nicefrac{G}{B}}$ on 
$\nicefrac{G}{B}$ is a $G$-linearized 
line bundle on $\nicefrac{G}{B}$ associated to the 
$B$-character $2 \rho$.  As a consequence the set 
of global sections of a power $\omega_{\nicefrac{G}{B}}^N$ 
of $\omega_{\nicefrac{G}{B}}$ may be $G$-equivariantly 
identified with 
the induced module ${\rm H}^0\big(-2 N \rho \big)$.
A similar result is true in the symplectic case once 
$\rho$ is substituted by the sum $\overline \omega_1+
\cdots + \overline \omega_m$ of the fundamental characters.
As we will see in the next section the case $N=1-p$ is related
to the notion of Frobenius splittings.

\section{Frobenius splitting}

Let $X$ denote a variety. The {\it absolute Frobenius morphism} on $X$ is the map of
schemes $F_X : X \rightarrow X$ which on the level of points is the identity
map and as a map of structure sheaves 
$$ F^{\sharp}_X : \mathcal O_X \rightarrow (F_X)_* \mathcal O_X ,$$
is the $p$-th power map. 

\begin{defn}(\cite{MehtaRamanathan1985})
The variety $X$ is said to admit a {\it Frobenius splitting} (or 
an F-splitting in short) if the map
of sheaves 
\begin{equation}
 F^{\sharp}_{X} : \mathcal O_{X} \rightarrow (F_{X})_* \mathcal O_X ,
\end{equation}
admits an $\mathcal O_{X}$-linear splitting, i.e. an element 
$$ s  \in {\rm End}_F(X)  := {\rm Hom}_{\mathcal O_{X}} \big(  (F_{X})_* \mathcal O_X ,
\mathcal O_{X} \big)$$
such that the composition $s \circ F^{\sharp}_{X}$ is the identity 
map on $\mathcal O_{X}$.

\end{defn}

\subsection{A local description}

When $X$ is a smooth variety we let $\omega_X$ denote the dualizing 
sheaf of $X$. By duality, for the finite morphism $F_{X}$, there is
a canonical $k$-linear identification
\begin{equation}
\label{cartier}
{\rm End}_F(X)^{(1)} \simeq {\rm H}^0 ( X , \omega_X^{1-p} ),
\end{equation}
where we use the notation ${\rm End}_F(X)^{(1)}$ for the set
$ {\rm End}_F(X)$ with $k$-structure twisted by the $p$-th root
map on $k$.
In particular, we may think of elements in
$ {\rm End}_F(X)$  as global sections of a linebundle. 
For a point $P \in X$ we may describe the identification 
(\ref{cartier}) locally as follows (see \cite[Prop.5]{MehtaRamanathan1985}): 
let $x_1,x_2, \dots, x_N$ 
denote a system of local coordinates at a point  $P$ of $X$. Then the section 
\begin{equation}
\label{dual}
{x_1^{a_1} x_2^{a_2} \cdots x_N^{a_N}}
( dx_1 \wedge dx_2 \wedge \cdots \wedge dx_N)^{1-p},
\end{equation}
of $\omega_X^{1-p}$ 
corresponds to the element in ${\rm End}_F(X)$ satisfying
$$ x_1^{b_1} x_2^{b_2} \cdots x_N^{b_N}
\mapsto  x_1^{\frac{a_1+b_1+1}{p}-1 }
 x_2^{\frac{a_2+b_2+1}{p}-1 } \cdots 
 x_n^{\frac{a_N+b_N+1}{p}-1 },$$
where we consider the right hand side to be zero if 
one of the rational exponents $\frac{a_i+b_i+1}{p}-1$
is not an integer. 

\begin{example}
\label{ex1}
Consider the case $X = \mathbb A^N$ in which 
$\omega_X$ is isomorphic to $\mathcal O_X$. 
Thus elements in ${\rm End}_F(X)$ correspond
to polynomials in 
$$ k[X] =  k[X_1, \dots, X_N].$$
  This correspondence is uniquely determined up 
to nonzero constants. 
Hence we may speak of an element $g$ in 
$k[X_1, \dots, X_N]$ as defining a Frobenius 
splitting of $\mathbb A^N$ (at least up to 
a nonzero constants). A necessary, but not
sufficient, condition for this is that the 
monomial
$$( X_1 X_2 \cdots X_N)^{p-1},$$
appears in $g$ with nonzero coefficient. To 
obtain a sufficient condition we need to add 
the conditions that all monomials of the form 
$$( X_1 X_2 \cdots X_N)^{p-1} M^{p},$$
for $M$ a nonzero monomial in $k[X_1,\dots,
X_N]$, should not appear in $g$.
\end{example}

\subsection{Compatible splitting}

Let $Y$ denote a closed subvariety of $X$ and let $\mathcal I_Y$
denote the associated sheaf of ideals in $\mathcal O_X$. 

\begin{defn}
An element  $s  \in {\rm End}_F(X)$ is said to 
be compatible with $Y$ if 
$$ s( (F_{X})_* \mathcal {I}_{Y} ) \subset \mathcal {I}_{Y}.$$
If, moreover, $s$ defines a Frobenius splitting of $X$ then we
say that $Y$ is compatibly Frobenius split in $X$.
The set of elements in ${\rm End}_F(X)$ which are 
compatible with $Y$ will be denoted by ${\rm End}_F(X,Y)$.
\end{defn}

\subsection{Multiplicities}
When $X$ and $Y$ are both smooth varieties we may use the 
local description in (\ref{cartier}) to obtain a criterion for the
compatibility of $Y$ : let $P \in Y$ and choose a 
system of local coordinates at $P$  in such a way that the ideal of 
$Y$ is generated by $x_1, \dots, x_d$ (with $d$ denoting the 
codimension of $Y$ in $X$). Then the element (\ref{dual}) 
will map the ideal $(x_1, \dots, x_d)$ to itself if one of the following
conditions are satisfied
\begin{itemize}
\item $a_i \geq p$ for some $1 \leq i \leq d$. 
\item $a_1=a_2 = \cdots = a_d = p-1$. 
\end{itemize}
In particular, this is the case if (\ref{dual}) vanishes
with multiplicity at least $(p-1) d$ along $Y$. This
leads to the following definition

\begin{defn}
Let $X$ be a smooth variety and let $Y$ denote a closed 
smooth subvariety. We say that an element $s$ of ${\rm End}_F(X)$ 
vanishes with multiplicity $m$ along $Y$, if the same is true  
for the associated global section of the line bundle $\omega_X^{1-p}$. 
\end{defn}

It then follows

\begin{lem}
\label{com}
Assume that $X$ and  $Y$ are smooth varieties and that 
$Y$ is of codimension $d$ in $X$. Let $s$ be an element  
of ${\rm End}_F(X)$ vanishing with multiplicity at least
$d (p-1)$ along $Y$. Then $s$ is compatible with $Y$. 
\end{lem}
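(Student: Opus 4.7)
The claim is local on $X$, so I would begin by fixing a point $P \in Y$ and choosing a regular system of parameters $x_1,\ldots,x_N$ at $P$ such that $\mathcal{I}_Y$ is generated locally by $x_1,\ldots,x_d$. Passing to the completion $\widehat{\mathcal{O}}_{X,P}\cong k[[x_1,\ldots,x_N]]$ it is enough to show that the induced $k$-linear endomorphism of $k[[x_1,\ldots,x_N]]$ coming from $s$ preserves the ideal $(x_1,\ldots,x_d)$, since this property descends to the completion of $\mathcal{O}_{X,P}$ and thus to a Zariski neighborhood of $P$.

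Using the identification (\ref{cartier}), the element $s$ corresponds in these local coordinates to a section of $\omega_X^{1-p}$ of the form
$$ f\cdot (dx_1\wedge\cdots\wedge dx_N)^{1-p}, \qquad f\in \widehat{\mathcal{O}}_{X,P},$$
and by the recipe in (\ref{dual}) the action of $s$ is the sum (over the monomials of $f$) of the basic operators described there. Since $(dx_1\wedge\cdots\wedge dx_N)^{1-p}$ is a local trivialization of $\omega_X^{1-p}$ and $Y$ is smooth and cut out locally by $x_1,\ldots,x_d$, the hypothesis that the section vanishes along $Y$ with multiplicity at least $d(p-1)$ translates precisely to $f\in \mathcal{I}_Y^{d(p-1)}\widehat{\mathcal{O}}_{X,P}$, i.e.\ every monomial $x_1^{a_1}\cdots x_N^{a_N}$ appearing in $f$ satisfies
$$ a_1+a_2+\cdots+a_d\;\ge\; d(p-1).$$

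At this point the lemma reduces to a pigeonhole argument on each such monomial. If some $a_i\ge p$ with $1\le i\le d$, then the first bullet preceding the lemma applies. Otherwise $a_i\le p-1$ for all $i\le d$, and together with the lower bound on $a_1+\cdots+a_d$ this forces $a_1=a_2=\cdots=a_d=p-1$, which is the second bullet. In either case the corresponding basic operator from (\ref{dual}) sends $(x_1,\ldots,x_d)$ into itself, as already observed. Summing over the monomials of $f$ and using linearity shows $s\big((F_X)_*\mathcal{I}_Y\big)\subset \mathcal{I}_Y$ locally, and hence globally.

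There is no real obstacle beyond keeping the multiplicity/ideal-power dictionary straight; the only point requiring mild care is the passage to the completion, which is harmless since compatibility with $\mathcal{I}_Y$ is a property that can be checked stalk-wise (and even after completion) because $Y$ is smooth and $\mathcal{I}_Y$ is generated by a regular sequence.
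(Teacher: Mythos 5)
Your argument is correct and is essentially the same as the paper's, which states the lemma "then follows" from the preceding bullet-point discussion: you have simply spelled out what that reduction is, namely that vanishing to order $d(p-1)$ along $Y=V(x_1,\dots,x_d)$ forces every monomial in the local expansion to have $a_1+\cdots+a_d\ge d(p-1)$ (a monomial-ideal fact), whence the pigeonhole gives one of the two bullets. The passage to the completion and the remark on checking compatibility stalk-wise are harmless bookkeeping that the paper leaves implicit.
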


When $s$ is assumed to be a Frobenius splitting of $X$ 
a vanishing multiplicity along $Y$ of size $d (p-1)$ is said 
to be maximal. This notion is explained  by the 
following result

\begin{prop}(\cite[Sect.2]{LakshmibaiMehtaParameswaran1998})
\label{maxmult}
Let $X$ be a smooth variety and 
let $Y$ denote a smooth closed subvariety of codimension $d$. 
Let $s$ denote a Frobenius splitting of $X$. Then $s$ vanishes 
with multiplicity at most $(p-1) d$ along $Y$.  Moreover, if $s$
vanishes with {\it maximal multiplicity} 
then  the 
blow-up of $X$ at $Y$ admits a Frobenius splitting which is
compatible with the exceptional divisor. 
\end{prop}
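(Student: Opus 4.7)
For the first assertion I would work locally near a point $P\in Y$ and invoke the explicit local description (\ref{cartier})--(\ref{dual}) of ${\rm End}_F(X)$. Choose local coordinates $x_1,\dots,x_N$ at $P$ in which $\mathcal I_Y=(x_1,\dots,x_d)$, and expand the section of $\omega_X^{1-p}$ attached to $s$ as $\sum_a c_a\,x_1^{a_1}\cdots x_N^{a_N}(dx_1\wedge\cdots\wedge dx_N)^{1-p}$. Applying (\ref{dual}) with $b_1=\cdots=b_N=0$, the splitting identity $s(1)=1$ forces $c_{(p-1,\dots,p-1)}=1$. Since the monomial $x_1^{p-1}\cdots x_N^{p-1}$ has order exactly $d(p-1)$ along $Y$, a vanishing multiplicity strictly greater than $d(p-1)$ would compel this coefficient to vanish, contradicting that $s$ is a splitting.

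For the second assertion I would assume that $s$ attains the maximal multiplicity $d(p-1)$, and let $\pi:\tilde X\to X$ be the blow-up of $X$ along $Y$, with smooth exceptional divisor $E$ and canonical section $\sigma_E\in{\rm H}^{0}(\tilde X,\mathcal O_{\tilde X}(E))$. Invoking the standard identification
$$\omega_{\tilde X}\;\simeq\;\pi^{*}\omega_X\otimes\mathcal O_{\tilde X}\bigl((d-1)E\bigr),$$
which rearranges to $\pi^{*}\omega_X^{1-p}\simeq\omega_{\tilde X}^{1-p}\otimes\mathcal O_{\tilde X}((p-1)(d-1)E)$, and using that $\pi^{-1}\mathcal I_Y\cdot\mathcal O_{\tilde X}=\mathcal O_{\tilde X}(-E)$, the pullback $\pi^{*}s$ vanishes along $E$ to order exactly $d(p-1)$. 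Division by $\sigma_E^{d(p-1)}$ therefore produces a unique
$$\tilde s\;\in\;{\rm H}^{0}\bigl(\tilde X,\,\omega_{\tilde X}^{1-p}\otimes\mathcal O_{\tilde X}(-(p-1)E)\bigr)\;\subset\;{\rm H}^{0}(\tilde X,\omega_{\tilde X}^{1-p}),$$
which by construction vanishes along $E$ with multiplicity at least $p-1$.

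Via (\ref{cartier}) the section $\tilde s$ defines an element of ${\rm End}_F(\tilde X)$, and Lemma~\ref{com} applied to the smooth codimension-one divisor $E\subset\tilde X$ gives $\tilde s\in{\rm End}_F(\tilde X,E)$. To show that $\tilde s$ is in fact a splitting I would restrict to $\tilde X\setminus E$: here $\pi$ is an isomorphism onto $X\setminus Y$ and $\sigma_E$ trivialises $\mathcal O_{\tilde X}(E)$, so under the natural identifications $\tilde s$ corresponds to $s$. Consequently the $\mathcal O_{\tilde X}$-linear endomorphism $\tilde s\circ F^{\sharp}_{\tilde X}-{\rm id}$ of $\mathcal O_{\tilde X}$ vanishes on the dense open subset $\tilde X\setminus E$, and the reducedness of $\tilde X$ forces it to vanish identically; hence $\tilde s$ is a Frobenius splitting of $\tilde X$ compatible with $E$.

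The main obstacle is the careful bookkeeping of line-bundle twists when passing from $s$ to $\pi^{*}s$ to $\tilde s$: one must confirm that dividing by $\sigma_E^{d(p-1)}$ lands precisely inside the subsheaf $\omega_{\tilde X}^{1-p}\otimes\mathcal O(-(p-1)E)\subset\omega_{\tilde X}^{1-p}$ so that Lemma~\ref{com} is directly applicable, and that the resulting endomorphism really restricts to the original splitting $s$ under the identification $\tilde X\setminus E\simeq X\setminus Y$; the remaining steps are essentially formal.
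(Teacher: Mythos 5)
Your proposal is correct. Note that the paper itself does not prove Proposition~\ref{maxmult}: it is stated with a citation to Section~2 of \cite{LakshmibaiMehtaParameswaran1998}, so there is no in-paper proof to compare against. Your argument recreates the standard LMP argument accurately: the local formula~(\ref{dual}) shows that $s(1)=1$ forces the monomial $(x_1\cdots x_N)^{p-1}$ to appear in the local expansion of $s$, which caps the vanishing order along $Y$ at $d(p-1)$; and for the second part, the Jacobian computation $dx_1\wedge\cdots\wedge dx_N = u_1^{d-1}\,du_1\wedge\cdots\wedge du_N$ in a standard blow-up chart shows that a local representative $f(x)(dx)^{1-p}$ with $f$ vanishing to order $d(p-1)$ along $Y$ pulls back to $u_1^{p-1}g(u)(du)^{1-p}$, so the transported section $\tilde s$ lies in $\omega_{\tilde X}^{1-p}\otimes\mathcal O(-(p-1)E)$, is compatible with $E$ by Lemma~\ref{com}, and agrees with $s$ off $E$, hence splits $\tilde X$. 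The only cosmetic point is that the single operation of viewing $\pi^*s$ inside $\omega_{\tilde X}^{1-p}$ already uses up $\sigma_E^{(d-1)(p-1)}$ of the vanishing, so the further division by $\sigma_E^{p-1}$ is what lands in the twisted subsheaf; your phrasing ``division by $\sigma_E^{d(p-1)}$'' packages these two steps together, which is fine as long as one keeps the target sheaf straight, as you do.
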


The maximal vanishing multiplicty is particular important 
in the case of the diagonal subvariety ${\rm diag}(X)$ in the
product $X \times X$. With $\boxtimes$ denoting the exterior
tensorproduct on $X \times X$ we have the following result 

\begin{thm}
\label{wahlsurj}
Let ${X}$ denote a smooth projective variety. Let
$\mathcal L$ denote an ample line bundle on $X$,
and  $\mathcal M_1$ and $\mathcal M_2$
denote globally generated line bundles on $X$. 
Define $\mathcal L_i = \mathcal L \otimes \mathcal M_i$,
for $i=1,2$.
Let further $\mathcal I$ denote the sheaf of ideals defining
the diagonal subvariety $ {\rm diag}(X)$ in $X \times X$. If
$X \times X$ admits a Frobenius splitting vanishing with 
maximal multiplicity along the diagonal, then the natural 
restriction map
$${\rm H}^0\big(X \times X, \mathcal I \otimes
(\mathcal L_1 \boxtimes  \mathcal L_2) \big)
\rightarrow
{\rm H}^0\big(X, \Omega_X^1 \otimes 
\mathcal L_1 \otimes \mathcal L_2 \big),$$
is surjective.
\end{thm}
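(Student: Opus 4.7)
The plan is to pass to the blow-up of $X\times X$ along the diagonal and reinterpret the Wahl map as a restriction map, then deduce surjectivity from the Frobenius splitting.

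Let $\pi\colon Z\to X\times X$ denote the blow-up along $\Delta=\Delta(X)$, with exceptional divisor $E$. Since $\Delta$ is smooth of codimension $d=\dim X$, the exceptional divisor $E$ is canonically identified with the projectivization $\PP(\Omega_X)$ of the conormal sheaf $\mathcal I/\mathcal I^2\cong \Omega_X$ (Hartshorne convention), with structure map $q\colon E\to X$; under this identification $\mathcal O_Z(-E)|_E$ becomes the tautological line bundle $\mathcal O_{\PP(\Omega_X)}(1)$, whose direct image under $q$ is $\Omega_X$. By Proposition~\ref{maxmult}, the assumed maximal-multiplicity Frobenius splitting of $X\times X$ produces a Frobenius splitting of $Z$ compatible with $E$.

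Consider the line bundle $\mathcal N := \pi^*(\mathcal L_1\boxtimes \mathcal L_2)\otimes \mathcal O_Z(-E)$. Standard blow-up computations give $\pi_*\mathcal N \cong \mathcal I\otimes (\mathcal L_1\boxtimes \mathcal L_2)$ with vanishing higher direct images, hence $H^0(Z,\mathcal N)\cong H^0\bigl(X\times X,\mathcal I\otimes (\mathcal L_1\boxtimes \mathcal L_2)\bigr)$. Likewise the projection formula yields $\mathcal N|_E = q^*(\mathcal L_1\otimes \mathcal L_2)\otimes \mathcal O_{\PP(\Omega_X)}(1)$, so that $H^0(E,\mathcal N|_E)\cong H^0(X,\Omega_X^1\otimes \mathcal L_1\otimes \mathcal L_2)$. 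Under these identifications, the restriction $H^0(Z,\mathcal N)\to H^0(E,\mathcal N|_E)$ is exactly the Wahl map~(\ref{wahl}), and the theorem reduces to showing this restriction is surjective, i.e.\ (via the sequence $0\to\mathcal N(-E)\to \mathcal N\to \mathcal N|_E\to 0$) to the vanishing $H^1(Z,\mathcal N(-E))=0$.

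The compatibility of the splitting with $E$ makes $\mathcal O_Z(-E)$ a direct summand of $F_*\mathcal O_Z(-E)$; twisting with $\mathcal N$ and using the projection formula $F_*\mathcal O_Z(-E)\otimes \mathcal N = F_*(\mathcal N^p(-E))$, and iterating $n$ times, one obtains for every $n\ge 1$ an injection
\[ H^1(Z,\mathcal N(-E))\;\hookrightarrow\;H^1\bigl(Z,\mathcal N^{p^n}(-E)\bigr). \]
It therefore suffices to produce one $n$ for which the right-hand side vanishes. Unwinding by Leray (using $R^i\pi_*\mathcal O_Z(-kE)=0$ for $i>0$ and $k\ge 0$), this reduces to the vanishing
\[ H^1\bigl(X\times X,\,(\mathcal L_1\boxtimes \mathcal L_2)^{p^n}\otimes \mathcal I^{p^n+1}\bigr)=0 \]
for some $n$, a Serre-type vanishing balancing the $p^n$-th Frobenius power of the ample bundle $\mathcal L_1\boxtimes \mathcal L_2$ against a comparable power of the ideal of $\Delta$.

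Verifying this last vanishing is the main obstacle. The hypothesis $\mathcal L_i=\mathcal L\otimes \mathcal M_i$ with $\mathcal L$ ample and $\mathcal M_i$ globally generated is precisely what is designed to supply the required positivity: the ample factor $(\mathcal L\boxtimes \mathcal L)^{p^n}$ grows with $n$, while the globally generated factor $(\mathcal M_1\boxtimes \mathcal M_2)^{p^n}$ contributes nef slack. Combined with the standard Frobenius-splitting cohomology vanishing of Brion--Kumar type, this should suffice, but the delicate balance between the positivity of the ample line bundle and the growing ideal power $\mathcal I^{p^n+1}$ is the technical heart of the argument.
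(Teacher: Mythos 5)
The paper itself does not prove Theorem~\ref{wahlsurj}: it explicitly attributes the flag-variety case to \cite[Sect.3]{LakshmibaiMehtaParameswaran1998} and the general form stated here to \cite[Thm.6.1]{LT}. So there is no internal proof to compare your argument against. Judged on its own, your setup is sound and almost certainly mirrors the start of the cited proof: the passage to the blow-up $Z$, the identification of $E$ with $\PP(\Omega_X)$ and of $\mathcal O_Z(-E)|_E$ with $\mathcal O_{\PP(\Omega_X)}(1)$, the reformulation of the Wahl map as the restriction $H^0(Z,\mathcal N)\to H^0(E,\mathcal N|_E)$, the reduction to $H^1(Z,\mathcal N(-E))=0$, and the Frobenius injections
\[ H^1(Z,\mathcal N(-E))\hookrightarrow H^1\bigl(Z,\mathcal N^{p^n}(-E)\bigr)\cong H^1\bigl(X\times X,\,(\mathcal L_1\boxtimes\mathcal L_2)^{p^n}\otimes\mathcal I^{p^n+1}\bigr) \]
are all correct.

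The gap is exactly where you say it is, and it is not a small one. The vanishing of the right-hand side for $n\gg 0$ is \emph{not} a Serre-type statement: Serre (or Fujita) vanishing applies to $H^1(\mathcal F\otimes\mathcal A^n)$ with $\mathcal F$ a \emph{fixed} coherent sheaf and $\mathcal A$ ample, whereas here the coherent sheaf $\mathcal I^{p^n+1}$ itself grows with $n$, at the same rate as the power of the ample bundle. Likewise, the Brion--Kumar cohomology vanishing for Frobenius-split varieties requires an \emph{ample} line bundle, but the relevant bundle on $Z$, namely $\mathcal N=\pi^*(\mathcal L_1\boxtimes\mathcal L_2)(-E)$, is only nef (already for $X=\PP^1$ and $\mathcal L_i=\mathcal O(1)$ one gets $\mathcal N\cong\mathcal O_Z$). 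The asymptotics are genuinely borderline: pushing to fibres of $Z\to X$ under the second projection, the vanishing amounts to $H^1\bigl(X,\mathcal L_1^{p^n}\otimes\mathfrak m_x^{p^n+1}\bigr)=0$, a jet-generation condition in which the order of jets grows linearly with the power of $\mathcal L_1$; for $X=\PP^d$, $\mathcal L_1=\mathcal O(1)$ the two sides match on the nose, so there is no room to throw away constants. Closing this requires a real idea exploiting the hypothesis $\mathcal L_i=\mathcal L\otimes\mathcal M_i$ and the positivity of $\pi^*(\mathcal L\boxtimes\mathcal L)(-E)$ (or an inductive bootstrapping via the exceptional divisor as in \cite[Prop.6.8]{LT}), not merely a citation of ``standard Frobenius-splitting vanishing.'' As written, the argument is incomplete at precisely the step that constitutes the technical content of \cite[Thm.6.1]{LT}.
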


Theorem \ref{wahlsurj} was proved in 
\cite[Sect.3]{LakshmibaiMehtaParameswaran1998}
in the case of a generalized flag variety $X$. 
The general form of Theorem \ref{wahlsurj} was 
obtained in \cite[Thm.6.1]{LT}. Notice that 
when $X$ is a generalized flag variety (in 
fact, any Schubert variety \cite[Sect.6]{LT})) 
then any pair $\mathcal L_1$, $\mathcal L_2$ of 
ample line bundles on $X$ will be of the form
described in Theorem \ref{wahlsurj}. In 
particular, in this case Wahl's conjecture is a 
consequence of Theorem \ref{wahlsurj}. 

As a consequence of Proposition \ref{maxmult}
the existence of a Frobenius splitting of $X \times X$
vanishing with maximal multiplicty along the
diagonal will imply that the projectivized tangent 
bundle of $X$ admits a Frobenius splitting. 
We therefore recall the following vanishing 
result

\begin{thm}(\cite[Prop.6.8]{LT})
Let ${X}$ denote a smooth projective variety
and assume that the projectivized tangent bundle 
on $X$ admits a Frobenius splitting.
Let $\mathcal L$ (resp. $\mathcal M$) 
denote a very ample (resp. globally generated) 
line bundle on $X$ and let $j> 0$ denote an 
integer. Then 
\begin{equation}
\notag
{\rm H}^i\big( Z , S^j \Omega_{Z} \otimes {\mathcal L}^{2j}
\otimes \mathcal M \big) = 0 ~, \text { for } i>0.
\end{equation}
\end{thm}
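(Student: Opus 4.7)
The plan is to pull the cohomological question up from $X$ to the projectivized tangent bundle $\pi : Z = \mathbb{P}(T_X) \to X$, on which a Frobenius splitting is granted by hypothesis. I would adopt the convention for which $\mathcal{O}_Z(1)$ is the relative tautological line bundle satisfying $\pi_* \mathcal{O}_Z(j) = S^j \Omega_X$ and $R^s \pi_* \mathcal{O}_Z(j) = 0$ for $s > 0$ and $j \geq 0$. Combining the projection formula with the Leray spectral sequence would then yield the isomorphism
$$ H^i\bigl(X, S^j \Omega_X \otimes \mathcal{L}^{2j} \otimes \mathcal{M}\bigr) \;\cong\; H^i(Z, \mathcal{N}), \qquad \mathcal{N} := \mathcal{O}_Z(j) \otimes \pi^*\bigl(\mathcal{L}^{2j} \otimes \mathcal{M}\bigr), $$
reducing the problem to showing that $H^i(Z, \mathcal{N}) = 0$ for $i > 0$.

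My next task would be to verify that $\mathcal{N}$ is ample on $Z$. Fixing any projective embedding $X \hookrightarrow \mathbb{P}^N$ afforded by the very ample bundle $\mathcal{L}$, the globally generated twisted cotangent sheaf $\Omega_{\mathbb{P}^N}(1)$ restricts to a globally generated bundle on $X$, whose quotient $\Omega_X \otimes \mathcal{L}$ is therefore also globally generated via the conormal surjection $\Omega_{\mathbb{P}^N}|_X \twoheadrightarrow \Omega_X$. A further twist by $\mathcal{L}$ would present $\Omega_X \otimes \mathcal{L}^2$ as a quotient of $\mathcal{L}^{\oplus r}$; since a direct sum of copies of the ample line bundle $\mathcal{L}$ is an ample vector bundle, and quotients of ample vector bundles are ample, the bundle $\Omega_X \otimes \mathcal{L}^2$ would be ample. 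Consequently $\mathcal{O}_Z(1) \otimes \pi^*\mathcal{L}^2$ would be an ample line bundle on $Z$; taking its $j$-th power and twisting by the nef line bundle $\pi^*\mathcal{M}$ preserves ampleness, and so $\mathcal{N}$ would be ample.

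To finish, I would invoke the standard vanishing for Frobenius split varieties. The Frobenius splitting of $Z$ furnishes injections $H^i(Z, \mathcal{N}) \hookrightarrow H^i(Z, \mathcal{N}^p) \hookrightarrow H^i(Z, \mathcal{N}^{p^2}) \hookrightarrow \cdots$, and ordinary Serre vanishing for the ample line bundle $\mathcal{N}$ would kill $H^i(Z, \mathcal{N}^{p^n})$ once $n$ is sufficiently large; combined with the isomorphism of the first paragraph, this would deliver the desired vanishing on $X$. The main obstacle in the plan is isolating the ampleness of $\mathcal{N}$ from the combined hypotheses on $\mathcal{L}$ and $\mathcal{M}$ — specifically the passage from very ampleness of $\mathcal{L}$ to ampleness of the vector bundle $\Omega_X \otimes \mathcal{L}^2$; once that is in hand, the reduction to $Z$ via the projection formula and the Mehta--Ramanathan vanishing for Frobenius split varieties are both essentially formal.
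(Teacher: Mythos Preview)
The paper does not supply its own proof of this statement: it is quoted verbatim from \cite{LT} (as Prop.~6.8 there) and followed only by the remark ``We refer to \cite{LT} for further related vanishing results.'' There is therefore nothing in the present paper to compare your argument against.

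That said, your argument is correct and is the standard one. The identification $H^i(X,S^j\Omega_X\otimes\mathcal L^{2j}\otimes\mathcal M)\cong H^i(Z,\mathcal N)$ via the projection formula and the vanishing of higher direct images is routine; the ampleness of $\Omega_X\otimes\mathcal L^2$ via the chain ``$\Omega_{\mathbb P^N}(1)$ globally generated $\Rightarrow$ $\Omega_X\otimes\mathcal L$ globally generated $\Rightarrow$ $\Omega_X\otimes\mathcal L^2$ a quotient of $\mathcal L^{\oplus r}$, hence ample'' is exactly right; and the final step (Frobenius splitting gives $H^i(Z,\mathcal N)\hookrightarrow H^i(Z,\mathcal N^{p^n})$, then Serre vanishing) is the Mehta--Ramanathan argument. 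This is almost certainly the proof given in \cite{LT} as well, since it is the only natural route. One cosmetic point: the displayed statement in the paper writes $S^j\Omega_Z$ and $H^i(Z,\ldots)$, which is evidently a typo for $S^j\Omega_X$ and $H^i(X,\ldots)$; you have silently corrected this, which is the right reading.
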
 

\

We refer to \cite{LT} for further related vanishing 
results.

\subsection{The evaluation map}
\label{evx}

Any element  in  ${\rm End}_F(X)$ may be evaluated 
at the constant function $1 \in k[X]$. This defines a 
map
$$ {\rm ev_X} : {\rm End}_F(X) \rightarrow k[X],$$
$$ s \mapsto s(1).$$
Thus $s$ in  ${\rm End}_F(X)$
defines a Frobenius splitting of $X$ if and only if 
${\rm ev}_X(s) = 1$. In particular, the condition 
that $s$ is a Frobenius splitting may be checked 
on an open dense subset of $X$; i.e. if $V$ is
an open dense subset of $X$ and if the restriction 
of $s$ to $V$ defines a Frobenius splitting of $V$,
then $s$ defines a Frobenius splitting of $X$. 
In case $k[X]=k$, e.g. if  $X$ is complete and irreducible,
any element $s\in {\rm End}_F(X)$  with nonzero evaluation 
${\rm ev}_X(s)$ 
will thus define a Frobenius splitting of $X$ up to a nonzero constant.

When $X$ is smooth we may use the evaluation 
map ${\rm ev}_X$  and the identification (\ref{cartier}) 
to define a $k$-linear map ($k$ is perfect)
$$ {\rm H}^0\big(X, \omega_X^{1-p} \big)
\rightarrow k.$$
This map will, by abuse of notation, also be denoted
by ${\rm ev}_X$ and will  also be called the evaluation
map. By the local description (\ref{dual}) we may 
(locally) consider ${\rm ev}_X$ as the map which picks 
out the coefficient in front of
$$(x_1 x_2 \cdots x_N)^{p-1} (dx_1 \wedge \cdots 
\wedge dx_N)^{1-p},$$ 
 in the local monomial expansions of an element 
$s \in {\rm H}^0\big(X, \omega_X^{1-p} \big)$
relative to a system of local coordinates $x_1, \dots, 
x_N$ at a point $P$ of $X$. In particular, ${\rm ev}_X(s)$
is nonzero if this coefficient is nonzero for some (and
hence any) point $P$.
 
\subsection{Canonical Frobenius splittings}

Let now $X$ denote a $G$-variety, and consider 
the vector space ${\rm End}_F(X)$ as a $G$-module 
in the natural way. A $G$-equivariant morphism 
$$ \phi : {\rm St} \otimes {\rm St} 
\rightarrow
{\rm End}_F(X)^{(1)},$$
is said to be a canonical Frobenius splitting of $X$
if the image of $\phi$ contains a Frobenius splitting
of $X$. The notion of a canonical Frobenius 
splitting of a ${\overline G}$-variety is defined 
similarly.

\section{Residual normal crossing}

In the following we will consider a polynomial ring
$k[X_1, \dots, X_N]$ and a totally ordered subset 
$J_1$ of the set of variables $X_i$.  The order on $J_1$
will be denoted by $ \leq_1$. 

\begin{defn}
An element $g$ in $k[X_1, \dots, X_N]$ is  
said to be a {\it residual normal crossing relative to 
the ordered subset $J_1$}, if
the following recursive conditions are 
satisfied
\begin{enumerate}
\item If $J_1$ is empty then $g$ is a nonzero constant.
\item If $J_1$  is nonempty and $X_j$ is the minimal element in 
$J_1$ then $X_j$ divides $g$.  Moreover, the 
evaluation 
$(\frac{g }{ X_{j}})_{|X_j=0}$
of the quotient $\frac{g }{ X_{j}}$  at $X_j=0$,
is then a residual normal crossing relative to the subset 
$ J_1 \setminus \{X_j \}$, with the induced 
total order. 
\end{enumerate}
\end{defn}

Notice that a necessary (but not sufficient) condition for 
$g$ to be a residual normal crossing relative to 
$J_1$,  is that the monomial 
$$  \prod_{j \in J_1} X_j,$$
appears in $g$ with nonzero coefficient. In fact, even 
more general results like this are true (see Proposition
\ref{fres} below).

When $(J_1, \leq_1)$ and $(J_2,\leq_2)$ denote disjoint 
totally ordered subsets  of the set of variables, then we 
let $\leq_{1,2}$ denote the total order on the union 
$J_1 \cup J_2$ defined by : 
$ X_a \leq_{1,2} X_b $ if one of the following conditions are
satisfied
\begin{itemize}
\item $X_a,X_b \in J_1$ and $X_a \leq_1 X_b.$ 
\item  $X_a,X_b \in J_2$ and $X_a \leq_2 X_b.$ 
\item $X_a \in J_1$ and $X_b \in J_2$.
\end{itemize}
This explains the notation in the following statement

\begin{lem}
\label{product}
Let $f$ and $g$ denote elements in $k[X_1,\dots,X_N]$. 
Assume that $g$ is a residual normal crossing relative
to a subset $J_1$, and that the evaluation 
$$ \overline{f} = f_{ | \{ X_j= 0 : X_j \in J_1\} }$$
is a residual normal crossing relative to an ordered 
subset $J_2$. Then $J_1$ and $J_2$ are
disjoint and the product $fg$ is a residual normal crossing
relative to the union $J_1 \cup J_2$ ordered by $\leq_{1,2}$.
\end{lem}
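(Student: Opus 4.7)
The plan is to establish the two assertions in sequence: first the disjointness of $J_1$ and $J_2$, then the residual normal crossing property of $fg$ by induction on $|J_1|$.

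For disjointness, I would argue by contradiction. Suppose $X_j \in J_1 \cap J_2$. Since $\overline{f}$ is obtained by setting every variable in $J_1$ to zero, the polynomial $\overline{f}$ does not involve $X_j$ at all. The recursive definition of residual normal crossing relative to $J_2$ reduces $\overline f$ by repeatedly dividing by the current minimum element of a shrinking subset of $J_2$ and then evaluating that variable at zero; as long as this variable is distinct from $X_j$, both operations preserve the property of not involving $X_j$. Hence when $X_j$ eventually becomes the minimum of the remaining subset, the current polynomial still does not involve $X_j$, yet must be divisible by $X_j$, forcing it to be identically zero. This contradicts the requirement that the process terminate in a nonzero constant.

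For the residual normal crossing property of $fg$, I would induct on $|J_1|$. The base case $J_1 = \emptyset$ reduces to showing that the product of a nonzero constant $g$ with a residual normal crossing $f = \overline f$ relative to $J_2$ is again a residual normal crossing relative to $J_2$; a straightforward induction on $|J_2|$ handles this. For the inductive step, let $X_j$ be the minimum element of $J_1$, which, by the very definition of $\leq_{1,2}$, is also the minimum of $J_1 \cup J_2$ under this order. Since $X_j \mid g$ we have $X_j \mid fg$, and since evaluation at distinct variables commutes,
$$\left(\frac{fg}{X_j}\right)_{|X_j = 0} \;=\; f_{|X_j = 0} \cdot \left(\frac{g}{X_j}\right)_{|X_j = 0}.$$
Setting $f' = f_{|X_j = 0}$ and $g' = (g/X_j)_{|X_j = 0}$, the polynomial $g'$ is by hypothesis a residual normal crossing relative to $J_1' := J_1 \setminus \{X_j\}$, and further evaluating $f'$ at the remaining variables in $J_1'$ reproduces $\overline f$, which by assumption is a residual normal crossing relative to $J_2$. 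The inductive hypothesis applied to $f'$ and $g'$ then gives that $f'g'$ is a residual normal crossing relative to $J_1' \cup J_2$ with the order built from $J_1'$ and $J_2$, closing the induction.

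The main obstacle is purely combinatorial bookkeeping: one has to check that removing the minimum element $X_j$ of $J_1 \cup J_2$ with respect to $\leq_{1,2}$ leaves exactly the union $J_1' \cup J_2$ with the order $\leq_{1,2}$ built from $J_1'$ and $J_2$. This follows at once from the three defining rules for $\leq_{1,2}$, because $X_j$ sits at the bottom of the $J_1$-block and removing it preserves the $\leq_1$-order on what remains as well as the property that every $J_1'$-element precedes every $J_2$-element. With this verified, both steps go through without further difficulty.
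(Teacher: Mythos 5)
Your proof is correct and follows the same recursive/inductive strategy the paper has in mind; the paper simply leaves the induction on $|J_1|$ implicit (``follows directly from the recursive definition'') whereas you spell it out, and for disjointness you argue directly through the recursion rather than invoking the necessary condition that $\prod_{X_j \in J_2} X_j$ appears in $\overline f$, but these are essentially the same observation.
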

\begin{proof}
That $J_1$ and $J_2$ are disjoint is clear as 
$\overline{f}$ does not involve any of the 
variables in $J_1$. In particular, 
the monomial
$$\prod_{X_j \in J_2} X_j,$$
could not appear in $\overline{f}$ unless 
$J_1$ and $J_2$ were disjoint. That $fg$ 
is a residual normal crossing relative to 
the ordered subset $J_1 \cup J_2$ now 
follows directly from the recursive definition. 
\end{proof}

The definition of being a residual normal crossing 
does not involve any of the variables $X_j$ 
outside  $J_1$. In particular, 
we have the following easy, but still useful, 
observation

\begin{lem}
\label{subst}
Let $g \in k[X_1,\dots,X_N]$ be a residual normal 
crossing relative to an ordered subset $J_1$. Assume
that $X_j \notin J_1$ and let 
$$ h= g(X_1, \cdots, X_{j-1}, f, X_{j+1}, \cdots,
X_N),$$
denote effect of substituting the $j$-th variable in $g$
by a polynomial $f \in k[X_1,\dots,X_N]$. Then $h$
is also a residual normal crossing relative to the 
subset $J_1$. 
\end{lem}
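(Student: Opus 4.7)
The plan is to proceed by induction on the cardinality of $J_1$, directly unwinding the recursive definition of residual normal crossing.

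For the base case $J_1 = \emptyset$, $g$ is a nonzero constant. Since constants involve no variables at all, the substitution $h = g(X_1,\dots,X_{j-1},f,X_{j+1},\dots,X_N)$ equals $g$, so $h$ is a nonzero constant and thus trivially a residual normal crossing relative to the empty set.

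For the inductive step, let $X_i$ denote the minimal element of $J_1$. Since $X_j \notin J_1$ we have $i \neq j$. By definition, $X_i$ divides $g$; write $g = X_i \cdot g'$ for some $g' \in k[X_1,\dots,X_N]$. Because the substitution does not touch $X_i$, we obtain $h = X_i \cdot h'$, where $h'$ is the polynomial obtained from $g'$ by the same substitution of $X_j$ by $f$. In particular $X_i$ divides $h$, as required by the first clause of the recursive condition. Moreover, since $i \neq j$, the operations of substituting $X_j \mapsto f$ and of evaluating at $X_i = 0$ commute; thus
\[
\bigl(h/X_i\bigr)_{|X_i = 0} \;=\; \bigl(g'_{|X_i=0}\bigr)\bigl(X_1,\dots,X_{j-1},\,f_{|X_i=0},\,X_{j+1},\dots,X_N\bigr).
\]

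By hypothesis, $(g/X_i)_{|X_i=0} = g'_{|X_i=0}$ is a residual normal crossing relative to the ordered subset $J_1 \setminus \{X_i\}$, and $X_j$ still does not belong to this smaller set. The induction hypothesis, applied to $g'_{|X_i=0}$ with substituted polynomial $f_{|X_i=0}$, therefore yields that $(h/X_i)_{|X_i=0}$ is a residual normal crossing relative to $J_1 \setminus \{X_i\}$ with the induced order. This verifies the recursive condition for $h$, completing the induction.

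The statement is essentially a formal check, and there is no substantive obstacle; the only small point requiring care is that substituting $X_j \mapsto f$ commutes with the evaluation at $X_i = 0$, which is immediate from $i \neq j$ (so long as one notes that $f$ may itself involve $X_i$, but this is harmless because we simply evaluate $f$ at $X_i = 0$ as well).
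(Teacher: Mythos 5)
Your proof is correct. The paper itself gives no proof of this lemma, disposing of it with the one-line remark that ``the definition of being a residual normal crossing does not involve any of the variables $X_j$ outside $J_1$''; your induction on $|J_1|$ is exactly the careful unwinding of that remark that one would expect if the author had chosen to write it out. The one point that genuinely needs attention is the one you flag at the end: the commutation of the substitution $X_j\mapsto f$ with the evaluation at $X_i=0$ when $f$ itself may involve $X_i$, and you handle it correctly by passing to $f_{|X_i=0}$ in the inductive step. This care is not idle — in the actual application (Proposition \ref{ressymp}) the substituted polynomials $f_{i,j}$ do involve variables lying in the relevant $J_1$, so the lemma must be stated and proved without any restriction on which variables $f$ depends on, and your argument does that. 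The only cosmetic remark is that the paper's lemma allows a single substitution while the application substitutes several variables at once; one applies the lemma iteratively, which your formulation supports since after one substitution $X_j$ still lies outside $J_1$.
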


\subsection{Frobenius splitting and residual normal 
crossing}
\label{connection}

Consider a complete variety $X$ containing an open 
dense subset $V$ isomorphic to $\mathbb A^N$. 
As explained in Section \ref{evx} an element $s$
of ${\rm End}_F(X)$ defines a Frobenius splitting 
of $X$, up to a nonzero constant, if and only if
the evaluation ${\rm ev}_X(s)$ is nonzero. Moreover,
this condition may be checked locally and means that 
$s$, when expressed in a system of local coordinates $x_1,
\dots, x_N$ at a point $P$, contains the monomial
$$ (x_1 x_2 \cdots x_N)^{p-1} (dx_1 \wedge \cdots
\wedge dx_N)^{1-p},$$
with nonzero coefficient. E.g. we could take $P$ to 
denote the origin (or any other point) in $V = 
\mathbb A^N$. In this case we may also formulate 
the nonzeroness condition of ${\rm ev}_X(s)$ as follows: 
consider, as explain in Example 
\ref{ex1},  the restriction $s_{|V}$, of $s$ to $V$, as 
an element in the coordinate ring
$$ k[V] = k[X_1,X_2, \dots, X_N],$$
of $V$. Then the monomial 
$$(X_1 X_2 \cdots X_N)^{p-1},$$
should appear in $s_{|V}$ 
with nonzero coefficient. A condition like the
latter may be checked using the notion of 
residual normal crossings. This is explained 
by the following result which follows easily 
from the definition.

\begin{prop}
\label{fres}
Let $g \in k[X_1,\dots, X_N]$ denote a residual normal
crossing relative to an ordered subset $J_1$ of the 
variables. Let $d$ denote any positive integer. 
Then the monomial
$$ \prod_{X_j \in J_1} X_j^d,$$
appears with nonzero coefficient in $g^d$.
\end{prop}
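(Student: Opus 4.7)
The plan is to prove Proposition \ref{fres} by induction on the cardinality $|J_1|$, following the recursive structure built into the definition of residual normal crossing.

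For the base case $|J_1|=0$, the polynomial $g$ is a nonzero constant $c$, so $g^d=c^d\neq 0$, and the empty product equals $1$, which appears with nonzero coefficient as required. For the inductive step, let $X_j$ denote the minimal element of $J_1$. By the definition of a residual normal crossing, $X_j$ divides $g$, so we may write $g = X_j \cdot q$ for some $q \in k[X_1,\dots,X_N]$, and moreover $\overline{q} := q_{|X_j=0}$ is a residual normal crossing relative to $J_1 \setminus \{X_j\}$. Raising to the $d$-th power gives
\begin{equation*}
g^d = X_j^d \cdot q^d.
\end{equation*}
Therefore the coefficient of $\prod_{X_i \in J_1} X_i^d$ in $g^d$ equals the coefficient of the monomial $M := \prod_{X_i \in J_1 \setminus \{X_j\}} X_i^d$ in $q^d$.

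The key observation is that $M$ does not involve $X_j$, so the coefficient of $M$ in $q^d$ coincides with the coefficient of $M$ in the evaluation $(q^d)_{|X_j=0} = \overline{q}^{\,d}$. Since $\overline{q}$ is a residual normal crossing relative to the smaller ordered set $J_1 \setminus \{X_j\}$, the inductive hypothesis applies and yields that the coefficient of $M$ in $\overline{q}^{\,d}$ is nonzero. This completes the induction. There is no real obstacle here: the proposition simply unwinds the recursive definition, and the only subtlety worth noting explicitly is the elementary but crucial fact that restricting commutes with taking powers and that coefficients of monomials not involving $X_j$ are preserved under the restriction $X_j=0$.
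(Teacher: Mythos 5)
Your proof is correct and follows the natural inductive unwinding of the recursive definition; the paper itself gives no proof (it states the result "follows easily from the definition"), and your argument is precisely the expected one, with the key point — that evaluation at $X_j=0$ is a ring homomorphism and that a monomial not involving $X_j$ has the same coefficient before and after this evaluation — correctly identified and justified.
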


\section{Concrete residual normal crossings}
\label{residual}

In this section we will study certain residual normal 
crossings associated to the group  $U^-$. We start by observing 
that $U^-$   is isomorphic to affine 
${n \choose 2}$-space, and choose
coordinate 
functions   $Z_{i,j}$ , $1 \leq j <  i \leq n$, 
in the natural way.

The functions $v_{\bf a}$ on $G$ defined in Section 
\ref{fund} are semi-invariant under translation by $B$
from the right and hence they are uniquely determined 
by their restrictions to $U^-$. It thus makes sense  
to consider $v_{\bf a}$ as functions on $U^-$; i.e. 
as elements in the polynomial ring  
 $k[Z_{i,j}] _{1 \leq j < i \leq n}$.
In the following we will consider the  functions ($1 \leq r \leq n$) 
$$ \mathfrak f_{r} = \prod_{s=1}^{r} 
v_{(\lfloor \frac{s}{2}
\rfloor+1 , \lfloor \frac{s}{2} \rfloor+2,
\dots, s)} \in k[U^-],$$
where $\lfloor \frac{s}{2} \rfloor$ denotes the biggest
integer smaller than $\frac{s}{2}$.
We claim that $\mathfrak f_r$ is a residual normal crossing
relative to an ordered subset of the  variables $Z_{i,j}$. To
make this precise we introduce the following ordering of 
the variables : $ Z_{i,j} \leq Z_{i',j'}$ if either $i+j <
i'+j'$ or if $i+j = i'+j'$ and $i \leq i'$. In the following
any subset of the variables will, unless otherwise stated,
be assumed to be ordered using this order. 

\begin{lem}
\label{res1}
The polynomial $\mathfrak f_r$ is a residual normal crossing 
relative to the subset 
$$\{ Z_{i,j} :  i+j \leq r+1 \},$$ 
of the variables.
\end{lem}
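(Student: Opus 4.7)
My plan is to argue by induction on $r$, with the inductive step handled by Lemma \ref{product}. The base case $r=1$ is immediate: $\mathfrak{f}_1 = v_{(1)}$ is the $(1,1)$-entry of an element of $U^-$, hence identically $1$, and the subset $\{Z_{i,j} : i+j \leq 2\}$ is empty, so the defining clause (1) applies.

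For the inductive step, I factor $\mathfrak{f}_r = \mathfrak{f}_{r-1}\cdot v_{\mathbf a}$ where $\mathbf a = (\lfloor r/2\rfloor + 1, \ldots, r)$, and set
$$
J_1 = \{Z_{i,j} : i+j \leq r\}, \qquad J_2 = \{Z_{i,j} : i+j = r+1\}.
$$
By induction, $\mathfrak{f}_{r-1}$ is a residual normal crossing relative to $J_1$ with its inherited order. Since every variable in $J_1$ strictly precedes every variable in $J_2$ in the global order, the combined order $\leq_{1,2}$ produced by Lemma \ref{product} coincides with the restriction of the global order to $J_1 \cup J_2$. Applying Lemma \ref{product} with $g=\mathfrak{f}_{r-1}$ and $f = v_{\mathbf a}$, the problem reduces to showing that the polynomial
$$
\bar v \;:=\; v_{\mathbf a}\big|_{Z_{i,j}=0,\ Z_{i,j}\in J_1}
$$
is a residual normal crossing relative to $J_2$.

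The key computation is that $\bar v = \pm\prod_{Z_{i,j}\in J_2} Z_{i,j}$. Indeed, $v_{\mathbf a}$ is the determinant of the submatrix of a generic element of $U^-$ formed from rows $\lfloor r/2\rfloor+1,\ldots,r$ and columns $1,\ldots,\lceil r/2\rceil$; its entries are $0$ above the diagonal, $1$ on the diagonal, and $Z_{i,j}$ below. Zeroing out $Z_{i,j}$ for every $(i,j)$ with $i+j\leq r$ kills every entry of the submatrix strictly below the anti-diagonal $i+j = r+1$, so what remains is anti-triangular. A direct inspection, splitting on the parity of $r$, shows that the anti-diagonal positions of the reduced submatrix are precisely the variables of $J_2$ when $r$ is even, and the variables of $J_2$ together with a single diagonal entry equal to $1$ (at the position $i=j=(r+1)/2$) when $r$ is odd. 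Expanding along this anti-diagonal yields the claimed identity. A squarefree monomial that involves every variable of $J_2$ exactly once is trivially a residual normal crossing relative to $J_2$ in any total order (factoring out the minimal variable and evaluating at $0$ leaves the product of the remaining variables, and one recurses), so Lemma \ref{product} finishes the inductive step.

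The only point demanding real care is the matrix calculation and its small case split on the parity of $r$ — one must confirm that the anti-diagonal of the reduced submatrix realizes exactly the elements of $J_2$ (plus the harmless diagonal $1$ for odd $r$). This is routine combinatorial bookkeeping rather than a substantive obstacle.
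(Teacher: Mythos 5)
Your proposal is correct and follows essentially the same route as the paper's own proof: induction on $r$, the factorization $\mathfrak{f}_r = \mathfrak{f}_{r-1}\cdot v_{(\lfloor r/2\rfloor+1,\dots,r)}$, and an appeal to Lemma~\ref{product} after observing that $v_{(\lfloor r/2\rfloor+1,\dots,r)}$ reduces to $\pm\prod_{i+j=r+1}Z_{i,j}$ upon setting $Z_{i,j}=0$ for $i+j\le r$. You simply spell out the anti-triangular determinant computation (and its parity split) that the paper states without elaboration, and you make explicit the small point that $\leq_{1,2}$ agrees with the ambient order — both sound, neither changing the argument.
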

\begin{proof}
We proceed by induction in $r$. As 
$$\mathfrak f_1 = 1,$$
the case $r=1$ is clear. Now assume that  $1 < r \leq n$ and that 
the statement is correct for $r-1$. Notice then
$$ \mathfrak f_r = \mathfrak f_{r-1} \cdot  v_{(\lfloor \frac{r}{2}
\rfloor+1, 
\dots, r)},$$
and that $ v_{(\lfloor \frac{r}{2}
\rfloor+1, 
\dots, r)}$, when 
evaluated at $Z_{i,j}=0$ for $i+j \leq r$, equals 
\begin{equation}
\label{factor2}
 \prod_{\stackrel{1 \leq j < i \leq n, }{ i+j = r+1}} Z_{i,j},
\end{equation}
up to a sign. We may thus apply Lemma \ref{product} to conclude that $\mathfrak f_r$
is a residual normal crossing relative to $Z_{i,j}$, for $i+j \leq r+1$.
This ends the proof.
\end{proof}

Let  $\mathfrak w_0$ in ${\rm GL}_n(k)$ denote the anti-diagonal 
matrix
whose $(i,j)$-th entry equals $1$ if $i+j = n+1$ and equals
$0$ otherwise. Then 
\begin{equation} 
\label{tau}
\tau : {\rm SL}_n(k) \rightarrow {\rm SL}_n(k),
\end{equation}
$$ \tau(g) = \mathfrak w_0 g^T \mathfrak w_0,$$
defines an automorphism of the variety $G$ of order 
two which leaves $U^-$ invariant. The map $\tau$ 
is not a group automorphism but composing $\tau$ 
with the inverse map 
$$ \iota : G \rightarrow G, ~ ~ \iota(g) = g^{-1},$$
we obtain an involution $\sigma = \tau \circ \iota$
of $G$ leaving $B$ invariant. In the following we let
$\mathfrak g_r \in k[U^-]$ denote the composition
$\mathfrak f_r \circ \tau$.

\begin{lem}
\label{res3}
The polynomial $\mathfrak g_r$ is a residual normal 
crossing relative to the reverse order on the subset 
$$\{ Z_{i,j} :  i+j \geq 2n+1-r \},$$ 
of the variables.
\end{lem}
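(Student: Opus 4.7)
The plan is to reduce Lemma \ref{res3} to Lemma \ref{res1} by observing that the involution $\tau$ acts on the coordinate ring of $U^-$ by a mere permutation of the generators. First, a direct matrix computation from $\tau(g) = \mathfrak{w}_0 g^T \mathfrak{w}_0$ shows that $\tau$ preserves $U^-$ and that the induced pullback on coordinate rings is the involution
\[ \tau^{\ast} : Z_{a,b} \longmapsto Z_{n+1-b,\,n+1-a}, \qquad 1 \leq b < a \leq n.\]
In particular $\tau^{\ast}$ is a bijection of the variable set $\{Z_{i,j}\}$.

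Next I would record the elementary observation that, since the notion of residual normal crossing is defined purely in terms of divisibility and evaluation at zero of coordinate functions, it is invariant under renaming of variables. More precisely, if $h \in k[X_1,\dots,X_N]$ is a residual normal crossing relative to an ordered subset $(J,\leq)$ and $\pi$ is any permutation of the variables, then $\pi(h)$ is a residual normal crossing relative to $(\pi(J),\leq_{\pi})$, where the pushforward order is defined by $\pi(X_a) \leq_{\pi} \pi(X_b)$ iff $X_a \leq X_b$. This is immediate from the recursive definition.

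Applying this principle to $\mathfrak{g}_r = \tau^{\ast}(\mathfrak{f}_r)$ together with Lemma \ref{res1}, I obtain that $\mathfrak{g}_r$ is a residual normal crossing relative to the image subset
\[ \tau^{\ast}\bigl(\{Z_{i,j} : i+j \leq r+1\}\bigr) = \{Z_{I,J} : I+J \geq 2n+1-r\},\]
equipped with the pushforward of the standard order. A short computation in the indices $(I,J)=(n+1-b,n+1-a)$ shows that this pushforward satisfies $Z_{I,J} \leq_{\tau^{\ast}} Z_{I',J'}$ iff $I+J > I'+J'$ or ($I+J = I'+J'$ and $I \leq I'$); the primary sort "smaller $i+j$ first" of the standard order becomes "larger $I+J$ first", identifying it with the reverse order in the statement. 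The secondary sort within each diagonal $I+J = \textrm{const}$ is in any case immaterial, since the inductive proof of Lemma \ref{res1} via Lemma \ref{product} already produces a residual normal crossing whose ordering within each level $i+j = s$ is free (the relevant factor at that level reduces to a monomial in distinct variables).

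I do not anticipate any real obstacle; the only work is the matrix bookkeeping giving the formula for $\tau^{\ast}$ and the tautological check that residual normal crossings are preserved under permutations of the coordinate variables.
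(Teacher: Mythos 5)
Your proof is correct and follows essentially the same route as the paper: both reduce Lemma~\ref{res3} to Lemma~\ref{res1} by observing that $\tau$ acts on $k[U^-]$ as the coordinate permutation $Z_{i,j} \mapsto Z_{n+1-j,n+1-i}$, which sends the anti-diagonal $i+j=s$ to the anti-diagonal $i+j=2n+2-s$ and hence reverses the primary sort. You are in fact slightly more careful than the paper, which calls this map simply ``order reversing'': as you note, the pushforward order preserves rather than reverses the secondary index within each anti-diagonal, and the proof genuinely requires your remark that the secondary ordering is immaterial (since the induction in Lemma~\ref{res1} via Lemma~\ref{product} builds each level from a squarefree monomial, any total order within a level works).
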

\begin{proof}
This follows directly from Lemma \ref{res1} once it is observed
that the effect of $\tau$ on $k[U^-]$ is given by the order reversing 
map $Z_{i,j}  \mapsto Z_{n+1-j, n+1-i}$.
\end{proof}

\begin{prop}
\label{res4}
There exists a total order on the full set of variables
$Z_{i,j}, 1 \leq j < i \leq n$, such that 
$$ \mathfrak f_n \mathfrak g_{n-1} $$
is a residual normal crossing relative to this ordered
set.
\end{prop}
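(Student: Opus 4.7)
The plan is to combine Lemmas \ref{res1} and \ref{res3} by means of the product Lemma \ref{product}, using that the two subsets of variables in those lemmas partition the full variable set. Concretely, I would set
\[
J_1 = \{Z_{i,j} : i + j \geq n + 2, \ 1 \le j < i \le n\},
\qquad
J_2 = \{Z_{i,j} : i + j \leq n + 1, \ 1 \le j < i \le n\},
\]
equipping $J_1$ with the reverse of the natural order of Section \ref{residual} (matching Lemma \ref{res3}) and $J_2$ with the natural order (matching Lemma \ref{res1}). Since $1 \le j < i \le n$ forces $i+j \in \{3, \ldots, 2n-1\}$, the subsets $J_1$ and $J_2$ are disjoint and their union is the full set of variables $\{Z_{i,j} : 1 \le j < i \le n\}$.

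With this setup, Lemma \ref{res3} makes $\mathfrak g_{n-1}$ a residual normal crossing relative to $J_1$, and Lemma \ref{res1} makes $\mathfrak f_n$ a residual normal crossing relative to $J_2$. To apply Lemma \ref{product} with $g = \mathfrak g_{n-1}$ and $f = \mathfrak f_n$, I still need to know that the evaluation
\[
\overline{\mathfrak f_n} := \mathfrak f_n|_{Z_{i,j} = 0 \text{ for } Z_{i,j} \in J_1}
\]
remains a residual normal crossing relative to $J_2$. This is where Lemma \ref{subst} enters: each variable in $J_1$ sits outside $J_2$, so substituting such a variable by the constant polynomial $0$ preserves the residual normal crossing property relative to $J_2$; iterating over the variables of $J_1$ gives the required statement for $\overline{\mathfrak f_n}$.

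Lemma \ref{product} then immediately yields that $\mathfrak f_n \mathfrak g_{n-1}$ is a residual normal crossing relative to $J_1 \cup J_2$ equipped with the combined order $\leq_{1,2}$, i.e.\ the total order which places every variable of $J_1$ (in the reverse natural order) before every variable of $J_2$ (in the natural order). Since $J_1 \cup J_2$ exhausts the full variable set, this is exactly the total order claimed by the proposition. I do not expect a serious obstacle here: the genuine combinatorial work was already carried out in Lemmas \ref{res1} and \ref{res3}, and the present statement is essentially a clean assembly of those two results via the formal machinery of Lemmas \ref{subst} and \ref{product}; the only bookkeeping point deserving explicit attention is the disjointness and exhaustiveness of the partition $\{Z_{i,j}\} = J_1 \sqcup J_2$.
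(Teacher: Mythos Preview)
Your argument is correct and follows the same route as the paper, which simply cites Lemmas \ref{res1}, \ref{res3}, and \ref{product}; you have merely made explicit the step (via Lemma \ref{subst}) that evaluating $\mathfrak f_n$ at $Z_{i,j}=0$ for $Z_{i,j}\in J_1$ preserves the residual normal crossing property relative to $J_2$, a detail the paper leaves to the reader.
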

\begin{proof}
This follows by applying Lemma \ref{res1}, Lemma
\ref{res3} and Lemma \ref{product}.
\end{proof}

\begin{remark}
In \cite{LakshmibaiMehtaParameswaran1998} a Frobenius 
splitting of $\nicefrac{G}{B}$  vanishing with maximal
multiplicity along the point $eB$ was constructed. 
The restriction of this Frobenius splitting to $U^-$ 
coincides with the $(p-1)$-th power of the composition 
$(\mathfrak f_n \mathfrak g_{n-1}) \circ \iota$
(see \cite[Remark $A_n$ (i)]{LakshmibaiMehtaParameswaran1998} 
and Lemma \ref{twist} below).  The idea 
of the present paper is to induce this Frobenius splitting into a
 Frobenius 
splitting of $\nicefrac{G}{B} \times \nicefrac{G}{B}$
vanishing with maximal multiplicity along the diagonal.
A similar remark applies in the symplectic case.  
\end{remark}

\section{Tensorproducts of fundamental representations}

In this section we will consider tensorproducts of the 
fundamental representations of $G$. To simplify notation 
we let $\omega_n$ and $\omega_0$ denote the trivial
$T$-characters. 
Fix integers $0 \leq i,j \leq n$, and let $[i+j]_n$ 
denote either $i+j$ if $i+j \leq n$, and 
$i+j-n$ otherwise. We want to construct a
$G$-equivariant nonzero morphism 
\begin{equation}
\label{embedding}
\eta_{i,j} : {\rm H}^0(\omega_{[i+j]_n}) \rightarrow 
{\rm H}^0(\omega_{i})  \otimes 
{\rm H}^0(\omega_{j}). 
\end{equation} 
We divide this construction into to cases 

\subsection{The case $i+j \leq n$}
By (\ref{iso1}) it suffices to construct a nonzero
$G$-equivariant morphism
\begin{equation}
\label{dualmult}
 \wedge_{i,j} : \wedge^{i+j} V^*
\rightarrow \wedge^{i} V^* \otimes
\wedge^{j} V^*.
\end{equation}
The latter is defined to be the dual of the product
map
\begin{equation} 
\wedge^i V \otimes \wedge^j V \rightarrow
\wedge^{i+j} V.
\end{equation}
With notation as in Section \ref{fund} and Section
\ref{ext} it is then an easy exercise to check that 
\begin{equation}
\label{shuffle}
\eta_{i,j}(v_{\bf a}) = 
\sum_{\g \in {\rm Sh}_{i,j}} {\rm sgn}(\g) ~~
v_{(a_{\g(1)}, \cdots, a_{\g(i)})} \otimes
v_{(a_{\g(i+1)}, \cdots, a_{\g(i+j)} )},
\end{equation}
where we use the notation ${\rm Sh}_{i,j}$ to
denote the set of $(i,j)$-shuffles,
i.e. the permutations $\g$ of $1,2\dots,i+j$ satisfying 
$\g(1) < \g(2) < \cdots < \g(i)$ and 
$\g(i+1) < \g(i+2) < \cdots < \g(i+j)$.
In particular, we note

\begin{lem}
\label{eval}
The composition of  $\eta_{i,j}$, for $i+j \leq n$, with 
the $B$-equivariant map 
\begin{equation}
\label{ev}
{\rm p}_{\omega_i} : {\rm H}^0\big( \omega_i \big) \rightarrow k_{-\omega_i},
\end{equation}
defines a $B$-equivariant morphism
$$  {\rm H}^0(\omega_{i+j}) \rightarrow k_{-\omega_i}
 \otimes  {\rm H}^0(\omega_{j}) ,
$$
given by 
$$v_{\bf a} \mapsto
\begin{cases}
1 \otimes v_{(a_{i+1}, \dots, a_{i+j}) }  & \text{if  $a_s=s$ for $s=1,\dots,i$}, \\
0 & \text{else},
\end{cases}
$$
when ${\bf a} = (a_1, \dots, a_{i+j})$. 
\end{lem}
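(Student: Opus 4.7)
My plan is to compute the composition directly from the explicit shuffle description (\ref{shuffle}) of $\eta_{i,j}$. The first step is to recall that, by definition, ${\rm p}_{\omega_i}$ is simply evaluation at the identity matrix $I_n$; applied to a basis element $v_{(b_1,\dots,b_i)}$ this yields the minor of $I_n$ on rows $b_1,\dots,b_i$ and columns $1,\dots,i$. Since the $b_s$ are strictly increasing, an elementary check shows this minor equals $1$ precisely when $(b_1,\dots,b_i) = (1,2,\dots,i)$ and vanishes otherwise.

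With this in hand, I would apply $({\rm p}_{\omega_i}\otimes{\rm id})$ termwise to (\ref{shuffle}). The only surviving shuffles $\g$ are those with $(a_{\g(1)},\dots,a_{\g(i)}) = (1,2,\dots,i)$. Because the sequence $a_1 < a_2 < \cdots < a_{i+j}$ is strictly increasing and $\g(1) < \cdots < \g(i)$, such a $\g$ exists at all only if $\{1,2,\dots,i\}\subseteq\{a_1,\dots,a_{i+j}\}$, which in turn forces $a_s = s$ for $s=1,\dots,i$. In that case $\g$ must be the identity shuffle, whose sign is $+1$, and the complementary factor reads $v_{(a_{\g(i+1)},\dots,a_{\g(i+j)})} = v_{(a_{i+1},\dots,a_{i+j})}$. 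This gives the displayed formula.

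$B$-equivariance of the composition is automatic: $\eta_{i,j}$ is $G$-equivariant by construction, and ${\rm p}_{\omega_i}$ is $B$-equivariant by the discussion in the preliminaries, so their composition along the first factor is $B$-equivariant. The entire argument is a short bookkeeping exercise, and I do not anticipate any genuine obstacle; the only thing to verify with care is the uniqueness of the contributing shuffle, which follows immediately from strict monotonicity.
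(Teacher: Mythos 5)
Your proof is correct and follows essentially the same route as the paper: both identify ${\rm p}_{\omega_i}(v_{\bf b})$ as $1$ when ${\bf b}=(1,\dots,i)$ and $0$ otherwise, then apply this termwise to the shuffle expansion (\ref{shuffle}); you merely spell out the bookkeeping (uniqueness of the contributing shuffle, that it must be the identity with sign $+1$) that the paper leaves implicit.
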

\begin{proof}
This follows from (\ref{shuffle}) once it is observed that the natural 
morphism (\ref{ev}) is given by 
$$ v_{\bf b} \mapsto 
\begin{cases}
1 & \text{if {\bf b} = (1,2,\dots, i)}, \\
0 & \text{else},
\end{cases}.
$$
for ${\bf b} = (b_1,\dots, b_i)$. 
\end{proof}

\subsection{The case $i+j >n$}

To define $\eta_{i,j}$ when $i+j >n$ we will use the
involution $\sigma$ of $G$ defined in Section \ref{residual}.  
Any element $f : G \rightarrow k$ in 
${\rm H}^0(\omega_s)$ will by composition
$f \circ \sigma$ with $\sigma$ define an element in 
${\rm H}^0(\omega_{n-s})$. This way we obtain  
a bijection between ${\rm H}^0(\omega_s)$ and
${\rm H}^0(\omega_{n-s})$.  Alternatively this
may be formulate by saying that $\sigma$ defines 
a $G$-equivariant isomorphism
\begin{equation}
\label{sigma2}
{\rm H}^0(\omega_s)^\sigma \simeq
{\rm H}^0(\omega_{n-s}),
\end{equation}
where ${\rm H}^0(\omega_s)^\sigma$ denotes 
the $G$-module 
which as a vector space is 
${\rm H}^0(\omega_s)$ but where the 
$G$-action is twisted by $\sigma$. As  
 $\eta_{n-i,n-j}$ has already been defined
above we may now define
$$  \eta_{i,j} : {\rm H}^0(\omega_{i+j-n}) \rightarrow 
{\rm H}^0(\omega_{i})  \otimes 
{\rm H}^0(\omega_{j}),$$ 
to the $\sigma$-twist of $\eta_{n-i,n-j}$.

In the following we will need more specific information
about the structure of (\ref{sigma2}). Recall that $\iota$ 
denotes the map $\iota(g) = g^{ -1}$ for $g \in 
U^-$. Then

\begin{lem}
\label{twist}
Let $2 \leq i \leq n$ denote an integer. Then 
the functions
\begin{equation}
\label{v3}
v_{(\lfloor \frac{i}{2} \rfloor +1,
 \lfloor \frac{i}{2} \rfloor +2, \dots, 
i)} \circ \iota,
\end{equation}
and
$$  v_{(\lceil \frac{i}{2} \rceil +1,
 \lceil \frac{i}{2} \rceil +2, \dots, 
i)},$$
on $U^-$ coincides up to a nonzero constant.
\end{lem}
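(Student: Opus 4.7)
The plan is to combine Jacobi's classical complementary minor identity with the block-triangular structure of a lower triangular unipotent matrix. Set $\ell = \lceil i/2 \rceil$ and $m = \lfloor i/2 \rfloor$, so that $\ell + m = i$. Note that $v_{(m+1, \dots, i)}$ on the left is a minor of size $\ell$, while $v_{(\ell + 1, \dots, i)}$ on the right is a minor of size $m$; these live in different fundamental representations when $i$ is odd, but the claim concerns only their restrictions to $U^-$ as elements of $k[U^-]$.

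The first step is to invoke Jacobi's identity: for $g \in G = {\rm SL}_n(k)$ and index sets $I, J \subset \{1, \dots, n\}$ of equal cardinality, the minor of $g^{-1}$ on rows $I$ and columns $J$ equals a sign (depending only on $I$ and $J$) times the complementary minor of $g$ on rows $J^c$ and columns $I^c$; the factor $\det g = 1$ contributes nothing. Applied with $I = \{m+1, \dots, i\}$ and $J = \{1, \dots, \ell\}$, this gives
\begin{equation*}
v_{(m+1, \dots, i)}(g^{-1}) = \pm \det g \bigl[ \{\ell + 1, \dots, n\} \,\bigm|\, \{1, \dots, m\} \cup \{i+1, \dots, n\} \bigr].
\end{equation*}

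The second step is to specialise to $g \in U^-$ and exploit lower-triangularity. Split the rows into $R_1 = \{\ell+1, \dots, i\}$ and $R_2 = \{i+1, \dots, n\}$, and the columns into $C_1 = \{1, \dots, m\}$ and $C_2 = \{i+1, \dots, n\}$. The $R_1 \times C_2$ block vanishes identically, since every row index in $R_1$ is at most $i$ and hence strictly below every column index in $C_2$; the $R_2 \times C_2$ block is the principal submatrix of $g$ on $\{i+1, \dots, n\}$, which is itself lower triangular unipotent and has determinant $1$. The resulting block upper-triangular form collapses the minor to
\begin{equation*}
\det g [ R_1 \mid C_1 ] = v_{(\ell + 1, \dots, i)}(g),
\end{equation*}
yielding the claimed identity up to the sign produced by Jacobi.

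No substantial obstacle is anticipated: Jacobi's complementary minor identity is classical, the block structure is forced by the shape of $U^-$, and the sign is irrelevant since the lemma only asserts equality up to a nonzero constant.
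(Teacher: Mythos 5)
Your proof is correct, but it takes a genuinely different route from the paper's. You compute directly: Jacobi's complementary minor identity transforms $v_{(\lfloor i/2\rfloor+1,\dots,i)}(g^{-1})$ into a minor of $g$ on rows $\{\lceil i/2\rceil+1,\dots,n\}$ and columns $\{1,\dots,\lfloor i/2\rfloor\}\cup\{i+1,\dots,n\}$, and then lower-triangular unipotence of $g \in U^-$ kills the $\{\lceil i/2\rceil+1,\dots,i\}\times\{i+1,\dots,n\}$ block and makes the $\{i+1,\dots,n\}$-principal block contribute a factor of $1$, leaving exactly $v_{(\lceil i/2\rceil+1,\dots,i)}(g)$ up to sign. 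The paper instead reduces to $i=n$, invokes representation theory --- namely that $v_{(\lceil n/2\rceil+1,\dots,n)}$ and $v_{(\lfloor n/2\rfloor+1,\dots,n)}\circ\sigma$ are both highest-weight vectors of the irreducible module $\mathrm H^0(\omega_{\lfloor n/2\rfloor})$, hence proportional --- and then notes that $v_{(\lfloor n/2\rfloor+1,\dots,n)}\circ\tau$ agrees with $v_{(\lfloor n/2\rfloor+1,\dots,n)}$ up to sign, so composing with $\sigma=\tau\circ\iota$ is the same as composing with $\iota$. Your argument is more elementary (pure linear algebra, no appeal to the isomorphism (\ref{sigma2}) or to one-dimensionality of highest-weight spaces) and yields the explicit sign; the paper's argument is shorter once the representation-theoretic machinery is in place and sits more naturally alongside the surrounding discussion of the $\sigma$-twist. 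Both are sound.
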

\begin{proof}
By concentrating on the first $i$ rows and columns 
we may assume that $i=n$. Now notice that 
\begin{equation}
\label{v1} 
v_{(\lceil \frac{n}{2} \rceil +1,
 \lceil \frac{n}{2} \rceil +2, \dots, n)},
\end{equation}
is a $B \times B$-semiinvariant functions 
in ${\rm H}^0(\omega_{\lfloor \frac{n}{2} 
\rfloor})$; in particular, it must generate the highest 
weight space in  ${\rm H}^0(\omega_{\lfloor 
\frac{n}{2} \rfloor})$. Similarly,
$ v_{(\lfloor \frac{n}{2} \rfloor +1,
 \lfloor \frac{n}{2} \rfloor +2, \dots, n)}$
is a a highest weight vector of 
 ${\rm H}^0(\omega_{\lceil
\frac{n}{2} \rceil})$ and thus by 
(\ref{sigma2}) the composition 
\begin{equation}
\label{v2}
 v_{(\lfloor \frac{n}{2} \rfloor +1,
 \lfloor \frac{n}{2} \rfloor +2, \dots, n)}
\circ \sigma,
\end{equation}
is a highest weight vector in ${\rm H}^0(\omega_{\lfloor 
\frac{n}{2} \rfloor})$. It follows that (\ref{v1})
and (\ref{v2}) coincide up to a nonzero constant 
as functions on $G$. To end the proof  observe
that (\ref{v2}) coincide with (\ref{v3}) (for $i=n$) 
as functions on $G$ and thus also on $U^-$.
\end{proof}

\begin{remark} 
As observed by Shrawan Kumar the above described 
maps $\eta_{i,j}$ are also predicted by the PRVK-conjecture.   
E.g. when $i+j \leq n-1$, the PRVK-conjecture (see e.g.
\cite[Thm4.3.2]{BrionKumar2005})
predicts the existence of a unique map (up to constants)
$$  {\rm H}^0\big( \theta \big) \rightarrow 
{\rm H}^0 \big( \omega_i \big) \otimes
{\rm  H}^0 \big( \omega_j \big) ,$$
where $\theta$ is the dominant $T$-characters in
the Weyl group orbit of the character $\omega_i + w_0 \omega_j$
($w_0$ denotes the element of maximal length in the Weyl group). It 
is an easy exercise to check that $\theta$ coincide with 
$\omega_{ i+j}$.
\end{remark}

\section{Vanishing behaviour of elements in the image of $\eta_{i,j}$}
\label{vanish}

Elements in the image of $\eta_{i,j}$ may be regarded  as 
regular functions on $G \times G$. In this section
we will obtain a lower bound on their order of vanishing 
along the diagonal ${\rm diag}(G)$ of $G \times G$. 
We let $g,h \in G$ denote elements with entries $g_{s,t}$ and $h_{s,t}$, 
$1 \leq s,t \leq n$, respectively, and form the $n \times 2n$ matrix  
\begin{equation}
\label{M}
M(g,h) = 
\begin{pmatrix}
g_{1,1} & \cdots & g_{1, n} & h_{1,1} & \cdots & h_{1,n} \\   
g_{2,1} & \cdots & g_{2, n} & h_{2,1} & \cdots & h_{2,n} \\ 
\vdots  & \cdots & \vdots & \vdots & \cdots & \vdots \\
g_{n,1} & \cdots & g_{n, n} & h_{n,1} & \cdots & h_{n,n} \\ 
 \end{pmatrix}.
\end{equation}

\subsection{The case $i+j \leq n$}

Fix $i$ and $j$ such that $i+j \leq n$ and
an increasing sequence ${\bf a}=(a_1, \dots,a_{i+j})$. 

\begin{lem}
\label{lemma1}
The evaluation of the function $\eta_{i,j}(v_{\bf a})$ at $(g,h)$ 
coincides with the $(i+j)$-minor of $M(g,h)$ determined by the 
row numbers 
$$ a_1,a_2 \dots, a_{i+j},$$
and the column numbers 
$$1, \dots, i ~ ~ ~  \text{ and } ~ ~ ~  n+1, \dots, n+j.$$
In particular,
the element $\eta_{i,j}(v_{\bf a})$ vanishes with multiplicity 
at least ${\rm min}(i,j)$ along the diagonal in $G \times G$.
\end{lem}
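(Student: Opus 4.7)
The plan is to prove the two claims separately, both by direct linear-algebra manipulations using the shuffle formula (\ref{shuffle}) and simple column operations.

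For the first claim, I would unpack (\ref{shuffle}) by recalling that for an increasing sequence ${\bf b} = (b_1, \dots, b_i)$ the function $v_{\bf b}$ evaluates at $g \in G$ to the minor of $g$ with row indices $b_1, \dots, b_i$ and column indices $1, \dots, i$. Substituting this into (\ref{shuffle}) and using the fact that the product $f_1 \otimes f_2 \in {\rm H}^0(\omega_i) \otimes {\rm H}^0(\omega_j)$ is evaluated on $G \times G$ by $(f_1 \otimes f_2)(g,h) = f_1(g) f_2(h)$ yields
$$\eta_{i,j}(v_{\bf a})(g,h) = \sum_{\gamma \in {\rm Sh}_{i,j}} {\rm sgn}(\gamma) \cdot v_{(a_{\gamma(1)}, \dots, a_{\gamma(i)})}(g) \cdot v_{(a_{\gamma(i+1)}, \dots, a_{\gamma(i+j)})}(h).$$
This is exactly the Laplace expansion of the $(i+j) \times (i+j)$ minor of $M(g,h)$ with rows $a_1, \dots, a_{i+j}$ and columns $\{1, \dots, i\} \cup \{n+1, \dots, n+j\}$, carried out by splitting the column set into the $g$-columns and the $h$-columns. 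The bijection between $(i,j)$-shuffles $\gamma$ and ordered splittings of the row index set into two increasing pieces, together with the fact that the shuffle sign agrees with the Laplace sign for the corresponding row splitting, gives the minor identification.

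For the vanishing claim, I would work locally near the diagonal by introducing $\varepsilon_{s,t} := h_{s,t} - g_{s,t}$, which generate the ideal $I_\Delta$ of ${\rm diag}(G)$ in $G \times G$ on a suitable affine neighborhood. Assume without loss of generality that $j \leq i$ (the other case is entirely symmetric). Carry out on the matrix $M(g,h)$ the column operations that subtract the $t$-th column from the $(n+t)$-th column for each $t = 1, \dots, j$. The determinant of the selected $(i+j) \times (i+j)$ submatrix is unchanged, but the last $j$ columns are now $(\varepsilon_{a_1, t}, \dots, \varepsilon_{a_{i+j}, t})^T$ for $t=1,\dots,j$, whose entries all lie in $I_\Delta$. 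Every term in a complete expansion of this determinant consequently contains at least $j = \min(i,j)$ factors drawn from those columns, so the minor itself lies in $I_\Delta^{\min(i,j)}$, which is precisely the required vanishing statement.

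I expect the main obstacle to be the sign bookkeeping in the first step: one must verify that the shuffle signs appearing in (\ref{shuffle}) agree with the standard Laplace signs for the stated column splitting of $M(g,h)$. This is routine but needs to be written out carefully. Once that matching is settled, the multiplicity estimate follows from the short column-operation argument above, with no further input required.
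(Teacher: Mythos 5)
Your proof is correct and follows essentially the same route as the paper: recognizing the shuffle sum as the Laplace expansion of the stated $(i+j)\times(i+j)$ minor of $M(g,h)$, and then performing column subtractions to expose $\min(i,j)$ columns whose entries vanish on the diagonal. Your version merely spells out the resulting membership in $I_\Delta^{\min(i,j)}$ a bit more explicitly than the paper, which simply remarks that the vanishing is ``readily apparent'' after the same column operations.
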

\begin{proof}
Recall that $v_{\bf b}$, for ${\bf b}=(b_1,\dots, b_s)$, is defined 
as the map $G \rightarrow k$ which takes a matrix to its 
$s \times s$-minor determined by the row numbers $b_1,\dots, b_s$ 
and the column numbers $1,\dots,s$. Thus, by the Laplace expansion 
theorem, we recognize the right hand side of  (\ref{shuffle})
as the determinant of the square matrix
\begin{equation}
\label{ij-minor}
\begin{pmatrix}
g_{a_1,1} & \cdots & g_{a_1, i} & h_{a_1,1} & \cdots & h_{a_1,j} \\   
g_{a_2,1} & \cdots & g_{a_2, i} & h_{a_2,1} & \cdots & h_{a_2,j} \\   
\vdots  & \cdots & \vdots & \vdots & \cdots & \vdots \\
g_{a_{i+j},1} & \cdots & g_{a_{i+j}, i} & h_{a_{i+j},1} & \cdots & h_{a_{i+j},j} \\   
 \end{pmatrix}.
\end{equation}
This ends the proof of the first part of the statement.

For the vanishing part of the statement we may, by
symmetry, assume that $i \leq j$. On the diagonal 
(i.e. when $g=h$) the matrix $M(g,h)$ contains $i$
pairs of equal columns; column $1$ and $i+1$, 
column $2$ and $i+2$ etc. In particular, if 
we subtract column $s$ from column $n+s$, for
$s=1, \dots,i$, before  calculating the 
$(i+j)$-minor (\ref{ij-minor}), then
the vanishing statement is readily 
apparent.   
\end{proof}

\subsection{The case $i+j > n$}

Now fix $i$ and $j$ such that $i+j >n$ and recall that 
$\eta_{i,j}$ was defined using the $\sigma$-twist of 
$\eta_{n-i,n-j}$. Then Lemma \ref{lemma1} directly implies

\begin{lem}
\label{lemma2-1}
Every element within the image of  $\eta_{i,j}$ will vanish with 
multiplicity at least ${\rm min}(n-i,n-j)$ along the diagonal in 
$G \times G$. 
\end{lem}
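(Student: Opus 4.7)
The plan is to reduce directly to Lemma \ref{lemma1}. Since the hypothesis $i+j>n$ forces $(n-i)+(n-j)=2n-i-j<n$, the map $\eta_{n-i,n-j} : {\rm H}^0(\omega_{2n-i-j}) \to {\rm H}^0(\omega_{n-i}) \otimes {\rm H}^0(\omega_{n-j})$ falls under the first case, and Lemma \ref{lemma1} guarantees that every element in its image vanishes to order at least $\min(n-i,n-j)$ along ${\rm diag}(G)$.

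First I would unwind the definition of $\eta_{i,j}$ as the $\sigma$-twist of $\eta_{n-i,n-j}$ at the level of regular functions on $G\times G$. Using the isomorphism (\ref{sigma2}), which is concretely given by $f \mapsto f \circ \sigma$, any $F \in {\rm H}^0(\omega_{i+j-n})$ corresponds to $F \circ \sigma \in {\rm H}^0(\omega_{2n-i-j})$. Writing $\eta_{n-i,n-j}(F \circ \sigma) = \sum_k f_k \otimes g_k$ with $f_k \in {\rm H}^0(\omega_{n-i})$ and $g_k \in {\rm H}^0(\omega_{n-j})$, and transporting each tensor factor back, one gets
\[
\eta_{i,j}(F)(g,h) \;=\; \sum_k f_k\bigl(\sigma(g)\bigr)\, g_k\bigl(\sigma(h)\bigr) \;=\; \bigl[\eta_{n-i,n-j}(F \circ \sigma)\bigr]\bigl(\sigma(g),\sigma(h)\bigr).
\]
Thus, as a function on $G \times G$, $\eta_{i,j}(F)$ is nothing but the pull-back along the automorphism $\sigma \times \sigma$ of a function in the image of $\eta_{n-i,n-j}$.

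The last point is that $\sigma \times \sigma$ is an automorphism of $G \times G$ which maps ${\rm diag}(G)$ isomorphically to itself: indeed, $(\sigma(g),\sigma(h)) \in {\rm diag}(G)$ if and only if $\sigma(g)=\sigma(h)$, which (since $\sigma$ is a variety automorphism) holds if and only if $g=h$. Pull-back by such an automorphism preserves orders of vanishing along ${\rm diag}(G)$, so $\eta_{i,j}(F)$ vanishes along the diagonal to the same order as the corresponding element in the image of $\eta_{n-i,n-j}$. Combining with the bound from Lemma \ref{lemma1} yields the asserted lower bound $\min(n-i,n-j)$. There is no real obstacle here; the entire argument is bookkeeping about the $\sigma$-twist and the fact that $\sigma\times\sigma$ stabilizes the diagonal.
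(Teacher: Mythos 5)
Your argument is correct and matches the paper's, which disposes of this lemma in one line by observing that $\eta_{i,j}$ is the $\sigma$-twist of $\eta_{n-i,n-j}$ and invoking Lemma \ref{lemma1}; you have simply unwound what "directly implies" means, namely that $\eta_{i,j}(F) = \bigl[\eta_{n-i,n-j}(F\circ\sigma)\bigr]\circ(\sigma\times\sigma)$ and that $\sigma\times\sigma$ is a variety automorphism of $G\times G$ stabilizing the diagonal, hence preserving vanishing orders along it. This is the same route, just carried out explicitly.
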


\section{Frobenius splitting of $\nicefrac{G}{B} \times \nicefrac{G}{B}$}

In this section we will construct a canonical Frobenius splitting 
$$ \eta :{\rm St} \otimes {\rm St} \rightarrow {\rm End}_F \big 
(\nicefrac{G}{B} \times \nicefrac{G}{B} \big)^{(1)},$$
of $\nicefrac{G}{B} \times \nicefrac{G}{B}$ where any element in 
the image will vanish with multiplicity at least $(p-1) 
{\rm dim}(\nicefrac{G}{B})$ along the diagonal. In particular, 
any element in the image of $\eta$ will be compatible with 
the diagonal ${\rm diag}(\nicefrac{G}{B})$. Actually we 
will construct $\eta$ using the  $G$-equivariant identification  
\begin{equation} 
\label{id1}
{\rm End}_F\big(\nicefrac{G}{B} \times 
\nicefrac{G}{B}\big)^{(1)} \simeq
{\rm H}^0\big(2(p-1) \rho\big)
\otimes {\rm H}^0\big(2(p-1) \rho\big),
\end{equation}
arising from the remarks in Section \ref{dualizing}

\subsection{Vanishing multiplicities}

The map $\eta$ is constructed as the tensorproduct of a collection 
of the maps $\eta_{i,j}$. To describe this construction we 
start by considering the tensorproduct of the maps ($1 \leq s <n$)
 $$ \eta_{ \lfloor \frac{s}{2} \rfloor, \lceil \frac{s}{2} \rceil} :
{\rm H}^0 \big (\omega_s \big) \rightarrow 
{\rm H}^0 \big (\omega_{\lfloor \frac{s}{2} \rfloor} \big) \otimes 
{\rm H}^0 \big (\omega_{\lceil \frac{s}{2} \rceil} \big),
$$
where $ \lfloor \frac{s}{2} \rfloor$ (resp.  $\lceil \frac{s}{2} \rceil$)
denotes the largest (resp. smallest) integer less (resp. larger) than 
$\frac{s}{2}$. This way we obtain a $G$-equivariant map
\begin{equation} 
\label{ten1}
 \bigotimes_{s=1}^{n-1} {\rm H}^0 \big (\omega_s \big)
\rightarrow \bigotimes_{s=1}^{n-1} 
\big({\rm H}^0 \big (\omega_{\lfloor \frac{s}{2} \rfloor} \big) \otimes 
{\rm H}^0 \big (\omega_{\lceil \frac{s}{2} \rceil} \big) \big) .
\end{equation}
Next we define  
\begin{equation} 
\label{ten2}
\bigotimes_{s=1}^{n-1} {\rm H}^0 \big (\omega_s \big)
\rightarrow \bigotimes_{s=1}^{n-1} 
\big({\rm H}^0 \big (\omega_{\lfloor \frac{s+n}{2} \rfloor} \big) \otimes 
{\rm H}^0 \big (\omega_{\lceil \frac{s+n}{2} \rceil} \big) \big) 
\end{equation}
as the tensorproduct of the maps ($1 \leq s <n$)
$$ \eta_{ \lfloor \frac{s+n}{2} \rfloor, \lceil \frac{s+n}{2} \rceil} :
{\rm H}^0 \big (\omega_s \big) \rightarrow 
{\rm H}^0 \big (\omega_{\lfloor \frac{s+n}{2} \rfloor} \big) \otimes 
{\rm H}^0 \big (\omega_{\lceil \frac{s+n}{2} \rceil} \big).
$$
We then notice
\begin{lem}
\label{steinb}
Any element in the image of (\ref{ten1}) or
(\ref{ten2}) will vanish with multiplicity at least
$\frac{n^2-2n}{4}$, when $n$ is even, and  
at least $\frac{(n-1)^2}{4}$, when n is odd,
along the diagonal in $G \times G$.
\end{lem}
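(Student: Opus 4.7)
The plan is to bound the vanishing multiplicity factor-by-factor, using Lemma \ref{lemma1} (or Lemma \ref{lemma2-1}) on each tensor slot and then invoking the standard additivity of vanishing orders under multiplication of sections.

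First I would unpack an element in the image of (\ref{ten1}) as a $k$-linear combination of pure tensors of the form
$$w=\bigotimes_{s=1}^{n-1}\eta_{\lfloor s/2\rfloor,\lceil s/2\rceil}(v_s),\qquad v_s\in{\rm H}^0(\omega_s).$$
By re-associating the tensor factors and applying the natural multiplication maps on each of the two tensor sides, the element $w$ is sent to the pointwise product
$$\prod_{s=1}^{n-1}\eta_{\lfloor s/2\rfloor,\lceil s/2\rceil}(v_s)\in{\rm H}^0\Bigl(\sum_s\omega_{\lfloor s/2\rfloor}\Bigr)\otimes{\rm H}^0\Bigl(\sum_s\omega_{\lceil s/2\rceil}\Bigr),$$
which is realised as a $B\times B$-semi-invariant regular function on $G\times G$ (equivalently, a section of the corresponding line bundle on $\nicefrac{G}{B}\times\nicefrac{G}{B}$).

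Next I would apply Lemma \ref{lemma1} to each factor. For every $1\le s\le n-1$ the inequality $\lfloor s/2\rfloor+\lceil s/2\rceil=s\le n$ holds, so the lemma gives that $\eta_{\lfloor s/2\rfloor,\lceil s/2\rceil}(v_s)$ vanishes along $\text{diag}(G)$ with multiplicity at least $\min(\lfloor s/2\rfloor,\lceil s/2\rceil)=\lfloor s/2\rfloor$. Since vanishing multiplicity along a smooth subvariety is additive under multiplication of sections of line bundles, the pure tensor $w$ vanishes to order at least $\sum_{s=1}^{n-1}\lfloor s/2\rfloor$ along $\text{diag}(G)$; the same bound extends to arbitrary $k$-linear combinations because ``vanishing to order at least $d$ along a subvariety'' defines a linear subspace of sections. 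An elementary summation then gives
$$\sum_{s=1}^{n-1}\lfloor s/2\rfloor=\begin{cases}\dfrac{n^2-2n}{4}&\text{if $n$ is even,}\\[.3em]\dfrac{(n-1)^2}{4}&\text{if $n$ is odd,}\end{cases}$$
which is the claimed bound. The case (\ref{ten2}) is handled identically, with Lemma \ref{lemma2-1} in place of Lemma \ref{lemma1}: here $\lfloor(s+n)/2\rfloor+\lceil(s+n)/2\rceil=s+n>n$, so each factor vanishes to order at least $n-\lceil(s+n)/2\rceil=\lfloor(n-s)/2\rfloor$, and after substituting $t=n-s$ the total is again $\sum_{t=1}^{n-1}\lfloor t/2\rfloor$, giving the same bound.

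There is no genuine obstacle: the two previous lemmas do all the substantive work, and what remains is the regrouping of tensor factors, the standard additivity of vanishing orders of sections under tensor products, and an elementary summation. The only point worth flagging is the reduction to pure tensors, which is automatic from linearity of the lower-bound-on-vanishing condition.
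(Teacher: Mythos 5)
Your proposal is correct and follows essentially the same route as the paper: the paper's proof is just a terse invocation of Lemma \ref{lemma1} (resp.\ Lemma \ref{lemma2-1}) on each tensor factor together with the summation identity $\sum_{s=1}^{n-1}\lfloor s/2\rfloor$, exactly as you spell out. The only thing you add is the (correct but implicit in the paper) remarks about reduction to pure tensors and additivity of vanishing order under multiplication of sections.
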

\begin{proof}
For the map (\ref{ten1}) the statement 
follows from Lemma \ref{lemma1} and the formula
\begin{equation}
\label{formula}
\sum_{s=1}^{n-1} \lfloor \frac{s}{2} \rfloor = 
\begin{cases}
\frac{(n-1)^2}{4} & \text{ when $n$ odd,} \\
\frac{n^2-2n}{4} &  \text{ when $n$ even.} 
\end{cases}
\end{equation}
For (\ref{ten2}) we apply Lemma \ref{lemma2-1}
along with (\ref{formula}).
\end{proof}

\subsection{The construction of $\eta$}

\begin{lem}
There exists a nonzero $G$-equivariant map
\begin{equation}
\label{prod2}
{\rm St} \rightarrow \bigotimes_{s=1}^{n-1} 
{\rm H}^0 \big (\omega_s \big)^{\otimes (p-1)},
\end{equation}
which is uniquely determined up to nonzero constants.
\end{lem}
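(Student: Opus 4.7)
The plan is to construct $\eta$ as the dual of the iterated multiplication map, and to prove uniqueness by a weight-space count.

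\textbf{Existence.} I would start from the surjective multiplication map (this is the fact cited from \cite[Thm.3.1.2]{BrionKumar2005} applied iteratively to dominant weights)
\begin{equation}
\notag
m \colon T := \bigotimes_{s=1}^{n-1} {\rm H}^0(\omega_s)^{\otimes(p-1)} \twoheadrightarrow {\rm H}^0\bigl((p-1)\rho\bigr) = {\rm St}.
\end{equation}
Dualising yields a $G$-equivariant injection ${\rm St}^* \hookrightarrow T^*$. By self-duality of the Steinberg module we have ${\rm St}^* \simeq {\rm St}$, and since ${\rm H}^0(\omega_s)^* \simeq {\rm H}^0(\omega_{n-s})$ (see Section \ref{fund}), the tensor factors of $T^*$ are exactly the tensor factors of $T$ permuted by $s \mapsto n-s$. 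Hence $T^* \simeq T$ as $G$-modules, and composing the three isomorphisms produces the desired nonzero $G$-equivariant map
\begin{equation}
\notag
\eta \colon {\rm St} \longrightarrow T.
\end{equation}

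\textbf{Uniqueness up to scalar.} I would argue via the highest weight space. Each factor ${\rm H}^0(\omega_s)$ has weights bounded above by $\omega_s$ in the dominance order, and its $\omega_s$-weight space is one-dimensional. Consequently the weight $(p-1)\rho = \sum_s (p-1)\omega_s$ occurs in $T$ with multiplicity exactly one, realised by the tensor product of highest weight vectors, and this one-dimensional space consists entirely of $B$-invariant vectors. Now any $G$-equivariant morphism $\mathrm{St} \to T$ must send a highest weight vector of $\mathrm{St}$ (which has weight $(p-1)\rho$ and is $B$-semi-invariant) to a $B$-semi-invariant vector of weight $(p-1)\rho$ in $T$; by the multiplicity-one statement such an image is determined up to a scalar. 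Since ${\rm St}$ is irreducible, the map is then determined everywhere up to the same scalar, giving uniqueness.

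\textbf{Main obstacle.} The only nontrivial point is the weight-multiplicity-one assertion in $T$, but this is genuinely straightforward because the highest weight $(p-1)\rho$ is the unique maximal weight and each factor already contributes a one-dimensional highest weight space. Thus the entire argument reduces to the standard dictionary between the multiplication map on induced modules, Steinberg self-duality, and the self-duality of the indicated tensor product.
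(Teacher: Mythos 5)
Your argument is correct and takes essentially the same route as the paper: both use self-duality of the Steinberg module and of $\bigotimes_s {\rm H}^0(\omega_s)^{\otimes(p-1)}$ to reduce to maps $T \to {\rm St}$ (you dualize the surjective product map; the paper invokes the 1--1 correspondence directly), and both pin down uniqueness by the one-dimensionality of the highest weight space of weight $(p-1)\rho$ in $T$ (you count weights directly, the paper phrases it as Frobenius reciprocity, but these are the same computation). The only difference is that your write-up spells out the steps the paper leaves implicit.
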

\begin{proof}
As ${\rm H}^0 \big (\omega_s \big)$ and  ${\rm H}^0 
\big (\omega_{n-s} \big)$, for $s=1, \dots,n-1$, are dual 
$G$-modules we start by observing that both sides of
(\ref{prod2}) are self-dual modules. Hence, maps of the 
form (\ref{prod2}) are in 1-1 correspondence with 
nonzero $G$-equivariant maps
\begin{equation}
\label{produ1} 
\bigotimes_{s=1}^{n-1} {\rm H}^0 \big (\omega_s \big)^{\otimes (p-1)}
\rightarrow {\rm St}.
\end{equation}
By Frobenius reciprocity the latter maps are constant 
multiples of the product map. This ends the proof. 
\end{proof}

We may now combine (\ref{prod2}) with the $(p-1)$-th 
tensorpower of (\ref{ten1}) and (\ref{ten2}). This leads to 
$G$-equivariant maps 
\begin{equation} 
\label{eta1}
\eta_1 : {\rm St} \rightarrow \bigotimes_{s=1}^{n-1} 
\big({\rm H}^0 \big (\omega_{\lfloor \frac{s}{2} \rfloor} \big) \otimes 
{\rm H}^0 \big (\omega_{\lceil \frac{s}{2} \rceil} \big) \big) ^{\otimes (p-1)},
\end{equation}
and 
\begin{equation}
\label{eta2}
 \eta_2 : {\rm St} \rightarrow \bigotimes_{s=1}^{n-1} 
\big({\rm H}^0 \big (\omega_{\lfloor \frac{s+n}{2} \rfloor} \big) \otimes 
{\rm H}^0 \big (\omega_{\lceil \frac{s+n}{2} \rceil} \big) \big) ^{\otimes (p-1)}.
\end{equation}
Next we take the tensorproduct of $\eta_1$ and $\eta_2$ 
with the $(p-1)$-th tensorpower of 
$$ \eta_{ \lfloor \frac{n}{2} \rfloor, \lceil \frac{n}{2} \rceil} :
k \rightarrow 
{\rm H}^0 \big (\omega_{\lfloor \frac{n}{2} \rfloor} \big) \otimes 
{\rm H}^0 \big (\omega_{\lceil \frac{n}{2} \rceil} \big),
$$
and arrive at a $G$-equivariant map
\begin{equation}
\label{eta0}
 {\rm St} \otimes {\rm St} 
\rightarrow 
\bigotimes_{s=1}^{2n-1} 
\big({\rm H}^0 \big (\omega_{\lfloor \frac{s}{2} \rfloor} \big) \otimes 
{\rm H}^0 \big (\omega_{\lceil \frac{s}{2} \rceil} \big) \big) ^{\otimes (p-1)}.
\end{equation}
To fix notation we assume that $\eta_1$ corresponds the left
factor of the tensorproduct $ {\rm St} \otimes {\rm St}$ in (\ref{eta0}).
Finally when applying the product map on each tensorfactor on the right 
hand side of (\ref{eta0}) we obtain  a map 
\begin{equation}
\label{id2}
{\rm St} \otimes {\rm St} \rightarrow
 {\rm H}^0(2(p-1) \rho) \otimes {\rm H}^0(2(p-1) \rho) .
\end{equation}
Now $\eta$ is defined from (\ref{id2}) by applying the identification 
(\ref{id1}).

\subsection{Maximal multiplicity}

By the next result and Lemma \ref{com} any element in the image of $\eta$ will be 
compatible with the diagonal.

\begin{lem}
 \label{mult}
Every element within the image of (\ref{id2}) will vanish with 
multiplicity at least $(p-1){\rm dim}(\nicefrac{G}{B})=(p-1)
{n \choose 2}$ 
along the diagonal in $\nicefrac{G}{B} \times \nicefrac{G}{B}$.
\end{lem}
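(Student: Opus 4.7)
The plan is to track vanishing multiplicities through each tensor factor that builds the map (\ref{id2}) and then verify that the totals sum to the desired $(p-1)\binom{n}{2}$. Recall that multiplicities of vanishing of sections of a line bundle add under tensor product, so it suffices to compute the contribution from each of the three ingredients entering (\ref{id2}): the map $\eta_1$ from (\ref{eta1}), the map $\eta_2$ from (\ref{eta2}), and the $(p-1)$-th tensor power of the middle factor $\eta_{\lfloor n/2 \rfloor, \lceil n/2 \rceil}:k \to {\rm H}^0(\omega_{\lfloor n/2 \rfloor}) \otimes {\rm H}^0(\omega_{\lceil n/2 \rceil})$. After these three pieces are combined in (\ref{eta0}), applying the product maps on each tensor factor cannot decrease vanishing orders along the diagonal, so a lower bound on the image of (\ref{eta0}) gives a lower bound on the image of (\ref{id2}).

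First I would handle $\eta_1$ and $\eta_2$. By construction, each is obtained as the composition of the nonzero $G$-map ${\rm St} \to \bigotimes_{s=1}^{n-1}{\rm H}^0(\omega_s)^{\otimes(p-1)}$ with the $(p-1)$-th tensor power of (\ref{ten1}) or (\ref{ten2}). Hence Lemma \ref{steinb} applies to each tensor slot, and summing over the $(p-1)$ copies shows that any element in the image of $\eta_1$, and likewise of $\eta_2$, vanishes along the diagonal with multiplicity at least $(p-1)\cdot \tfrac{n^2-2n}{4}$ for $n$ even and $(p-1)\cdot \tfrac{(n-1)^2}{4}$ for $n$ odd.

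Next I would estimate the contribution of the middle factor. The map $\eta_{\lfloor n/2 \rfloor, \lceil n/2 \rceil}$ falls under the regime $i+j = n \leq n$, so Lemma \ref{lemma1} (or Lemma \ref{lemma2-1} in the boundary case) gives that its image vanishes along ${\rm diag}(G)$ with multiplicity at least $\min(\lfloor n/2 \rfloor, \lceil n/2 \rceil)=\lfloor n/2 \rfloor$. Taking the $(p-1)$-th tensor power multiplies this by $(p-1)$.

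The final step is arithmetic. Summing the three contributions gives a total vanishing multiplicity of at least
\[
(p-1)\Bigl(2\cdot\tfrac{n^2-2n}{4}+\tfrac{n}{2}\Bigr)=(p-1)\cdot\tfrac{n^2-n}{2}=(p-1)\binom{n}{2}
\]
for $n$ even, and
\[
(p-1)\Bigl(2\cdot\tfrac{(n-1)^2}{4}+\tfrac{n-1}{2}\Bigr)=(p-1)\cdot\tfrac{(n-1)n}{2}=(p-1)\binom{n}{2}
\]
for $n$ odd. In each case the bound matches $(p-1){\rm dim}(\nicefrac{G}{B})$, completing the proof. The only real subtlety is ensuring that vanishing multiplicities are genuinely additive under the tensor and product operations used to assemble (\ref{id2}); since these operations correspond to multiplication of sections of line bundles on $G/B \times G/B$, this is automatic, so the main content of the argument is simply the bookkeeping of Lemmas \ref{lemma1}, \ref{lemma2-1} and \ref{steinb}.
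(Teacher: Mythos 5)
Your proposal is correct and follows essentially the same route as the paper's proof: applying Lemma \ref{lemma1} to the middle factor $\eta_{\lfloor n/2\rfloor,\lceil n/2\rceil}$, Lemma \ref{steinb} to the images of (\ref{ten1}) and (\ref{ten2}), multiplying the bounds by $(p-1)$ for the tensor powers, and concluding with the same arithmetic identities. The only superfluous remark is the parenthetical appeal to Lemma \ref{lemma2-1}, which is never needed here since $i+j=n\leq n$ always places $\eta_{\lfloor n/2\rfloor,\lceil n/2\rceil}$ in the regime of Lemma \ref{lemma1}.
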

\begin{proof}
Notice that by Lemma \ref{lemma1} every element within the image 
of  $\eta_{ \lfloor \frac{n}{2} \rfloor, \lceil \frac{n}{2} \rceil}$ will
vanish with multiplicity at least $ \lfloor \frac{n}{2} \rfloor$ along the 
diagonal. Thus by Lemma \ref{steinb} 
it suffices  to notice that
$$  \frac{n}{2} + 2 \frac{n^2-2n}{4} ={ n \choose 2},$$
and 
$$ 
\frac{n-1}{2} + 2 \frac{(n-1)^2}{4}= {n \choose 2}.$$
\end{proof}

As a consequence,  we may now consider $\eta$ as a map
 $$ \eta :{\rm St} \otimes {\rm St} \rightarrow {\rm End}_F \big 
(\nicefrac{G}{B} \times \nicefrac{G}{B} , {\rm diag}(\nicefrac{G}{B} ) \big)^{(1)}.$$

\subsection{A residual normal crossing}
 The only thing left is to prove that $\eta$ contains a Frobenius splitting
in its image. For this, we start by observing

\begin{lem}
\label{restrict}
Let $v_-$ denote a lowest weight vector in ${\rm St}$. Then 
the restriction of the function $\eta_1(v_-)$ to $\{ I_n \}  \times U^- $ 
coincides with  $\mathfrak f_{n-1}^{p-1}$ up a nonzero constant. Similarly, 
the restriction of $\eta_2(v_-)$ to $\{ I_n \}  \times U^- $ coincides
with $\mathfrak g_{n-1}^{p-1}$ up to a nonzero constant.
\end{lem}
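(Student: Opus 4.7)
I compute the two restrictions factor by factor. Since (\ref{prod2}) is $G$-equivariant and unique up to a nonzero scalar, and the lowest weight line of $\bigotimes_{s=1}^{n-1}{\rm H}^0(\omega_s)^{\otimes (p-1)}$ is one-dimensional, spanned by $\bigotimes_{s=1}^{n-1}v_{(1,\dots,s)}^{\otimes(p-1)}$, the image of the lowest weight vector $v_-$ under (\ref{prod2}) is a nonzero constant multiple of this tensor. Each of $\eta_1(v_-)$ and $\eta_2(v_-)$, viewed as a function on $G\times G$ via the product map on the tensor factors, then becomes $\prod_{s=1}^{n-1}\bigl[\eta_{i(s),j(s)}(v_{(1,\dots,s)})(g_1,g_2)\bigr]^{p-1}$ for the appropriate indices $(i(s),j(s))$, so the restriction to $\{I_n\}\times U^-$ reduces to evaluating each factor at $(I_n,h)$ for $h\in U^-$.

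For $\eta_1$, every factor has $i(s)=\lfloor s/2\rfloor$, $j(s)=\lceil s/2\rceil$ and $i+j=s\leq n-1<n$. Evaluation of the first coordinate at $I_n$ is the map ${\rm p}_{\omega_{\lfloor s/2\rfloor}}$, so Lemma \ref{eval} applies; the hypothesis $a_r=r$ is automatic for ${\bf a}=(1,\dots,s)$, so the evaluated value is $v_{(\lfloor s/2\rfloor+1,\dots,s)}(h)$. Taking the product over $s$ and raising to the $(p-1)$-th power gives $\mathfrak f_{n-1}(h)^{p-1}$ by definition of $\mathfrak f_{n-1}$.

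For $\eta_2$, each factor has $i+j=s+n>n$ and is defined as the $\sigma$-twist of $\eta_{n-i,n-j}=\eta_{\lceil(n-s)/2\rceil,\lfloor(n-s)/2\rfloor}$ via the isomorphism $\Phi\colon{\rm H}^0(\omega_s)^\sigma\simeq{\rm H}^0(\omega_{n-s})$, $f\mapsto f\circ\sigma$. A direct weight computation (using $\sigma(t)=\mathfrak w_0 t^{-1}\mathfrak w_0$ on the diagonal torus) shows $\mu\mapsto\mu\circ\sigma$ sends $-\omega_s$ to $-\omega_{n-s}$, so $\Phi$ takes lowest weight vectors to lowest weight vectors, and $v_{(1,\dots,s)}$ corresponds to $v_{(1,\dots,n-s)}$ up to a nonzero constant. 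Using $\sigma(I_n)=\mathfrak w_0^2=I_n$, evaluation at first coordinate $I_n$ commutes with $\Phi$; applying Lemma \ref{eval} to $\eta_{\lceil(n-s)/2\rceil,\lfloor(n-s)/2\rfloor}(v_{(1,\dots,n-s)})$ then yields $v_{(\lceil(n-s)/2\rceil+1,\dots,n-s)}$ in the second factor. Transporting back via $\Phi$ on the second factor, the restriction produces, as a function of $h\in U^-$, the value $v_{(\lceil(n-s)/2\rceil+1,\dots,n-s)}(\sigma(h))$, up to a nonzero constant.

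To finish, observe the identity $\iota\circ\sigma=\tau$: indeed $(\iota\sigma)(g)=(\mathfrak w_0(g^{-1})^T\mathfrak w_0)^{-1}=\mathfrak w_0 g^T\mathfrak w_0=\tau(g)$. For indices with $n-s\geq 2$, Lemma \ref{twist} gives $v_{(\lceil(n-s)/2\rceil+1,\dots,n-s)}=\mathrm{const}\cdot v_{(\lfloor(n-s)/2\rfloor+1,\dots,n-s)}\circ\iota$ on $U^-$, hence $v_{(\lceil(n-s)/2\rceil+1,\dots,n-s)}(\sigma(h))=\mathrm{const}\cdot v_{(\lfloor(n-s)/2\rfloor+1,\dots,n-s)}(\tau(h))$; the boundary case $s=n-1$ (where $n-s=1$) contributes $v_{(1)}(\sigma(h))=1=v_{(1)}(\tau(h))$ on both sides, so Lemma \ref{twist} is not needed there. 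Reindexing $t=n-s$, the product over $s$ raised to the $(p-1)$-th power becomes $\mathrm{const}\cdot\prod_{t=1}^{n-1}v_{(\lfloor t/2\rfloor+1,\dots,t)}(\tau(h))^{p-1}=\mathrm{const}\cdot\mathfrak f_{n-1}(\tau(h))^{p-1}=\mathrm{const}\cdot\mathfrak g_{n-1}(h)^{p-1}$. The main obstacle is the $\sigma$-twist bookkeeping for $\eta_2$: ensuring that the twist passes harmlessly through evaluation at $I_n$ on the first coordinate but produces a composition with $\sigma$ on the second coordinate, which is then converted into composition with $\tau$ via the identity $\iota\sigma=\tau$ and Lemma \ref{twist}.
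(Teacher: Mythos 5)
Your proof is correct and follows the same essential route as the paper's: show that (\ref{prod2}) lands $v_-$ in the tensor product of lowest weight lines, use Lemma \ref{eval} factor by factor for $\eta_1$, and for $\eta_2$ trace the $\sigma$-twist and invoke Lemma \ref{twist}. The paper's treatment of $\eta_2$ is a single terse sentence ("by Lemma \ref{twist}, $\eta_2(v_-)$ must equal $\eta_1(v_-)\circ\tau$ up to a nonzero constant"), so the careful bookkeeping you supply — that $\Phi$ commutes with evaluation at $I_n$ in the first coordinate because $\sigma(I_n)=I_n$, that $\Phi$ carries lowest weight to lowest weight, the identity $\iota\circ\sigma=\tau$, and the explicit reindexing $t=n-s$ — is a welcome expansion of a step the paper leaves implicit, not a different method.
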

\begin{proof}
Notice, first of all, that the map (\ref{prod2}) will map  
$v_-$ into the tensorproduct of the lowest weight spaces of 
the various ${\rm H}^0\big(
\omega_s\big)$. Thus by Lemma \ref{eval} and the
construction of $\eta_1$, the restricition of $\eta_1(v_-)$
to $\{ I_n \}  \times U^- $ will  coincide with the product
of the $(p-1)$-th powers of 
$$  v_{(\lfloor \frac{s}{2} \rfloor +1, \dots, s)},$$
for $s=1,2, \dots, n-1$. The latter product is by definition equal to
$\mathfrak f_{n-1}^{p-1} $.

To obtain the statement about $\eta_2(v_-)$ one first 
recalls that $\eta_{i,j}$, for $i+j>n$, was defined as 
the $\sigma$-twist of $\eta_{n-i,n-j}$. Hence, by Lemma 
\ref{twist}, the element $\eta_2(v_-)$ must equal the
composition $\eta_1(v_-) \circ \tau$ up to a nonzero
constant.  The statement 
thus follows from the first part of the proof above.
\end{proof}

\begin{prop}
\label{residual1}
Let $\mathfrak f$ denote the image of  $v_- \otimes v_-$ under 
(\ref{id2}). Then the restriction of $\mathfrak f$  to  
$\{ I_n \}  \times U^- $ coincides 
with the product
$$(\mathfrak f_n \mathfrak g_{n-1}) ^{p-1},$$
up to a nonzero constant.
\end{prop}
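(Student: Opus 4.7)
The plan is to unpack the construction of $\mathfrak f$ and reduce the computation to Lemma \ref{restrict} together with a direct evaluation provided by Lemma \ref{eval}. First, by the construction of (\ref{id2}), the image of $v_- \otimes v_-$ in the target of (\ref{eta0}) is the element
$$\eta_1(v_-) \otimes \eta_{\lfloor \frac{n}{2} \rfloor, \lceil \frac{n}{2} \rceil}(1)^{\otimes(p-1)} \otimes \eta_2(v_-)$$
(after an obvious reshuffling of tensor factors so that the $s=n$ slot receives the middle term). The subsequent product maps simply multiply pointwise on $G$ all the ``left'' factors lying in the ${\rm H}^0(\omega_{\lfloor s/2 \rfloor})$ slots, and likewise all the ``right'' factors in the ${\rm H}^0(\omega_{\lceil s/2 \rceil})$ slots. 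Hence, viewed as a function on $G \times G$, $\mathfrak f$ equals the pointwise product of $\eta_1(v_-)$, $\eta_{\lfloor n/2 \rfloor, \lceil n/2 \rceil}(1)^{p-1}$, and $\eta_2(v_-)$.

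Next, I would restrict each of these three factors separately to $\{I_n\} \times U^-$. The outer two factors are handled directly by Lemma \ref{restrict}, yielding $\mathfrak f_{n-1}^{p-1}$ and $\mathfrak g_{n-1}^{p-1}$ respectively, up to nonzero scalars. For the middle factor $\eta_{\lfloor n/2 \rfloor, \lceil n/2 \rceil}(1)$, I identify $1 \in k = {\rm H}^0(\omega_n)$ with the constant function $v_{(1,2,\dots,n)}$ on ${\rm SL}_n$ and apply Lemma \ref{eval} with $i = \lfloor n/2 \rfloor$, $j = \lceil n/2 \rceil$ and ${\bf a} = (1, 2, \dots, n)$. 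Since the congruence condition $a_s = s$ for $s = 1,\dots,\lfloor n/2 \rfloor$ is trivially satisfied here, the lemma tells me that evaluation at $(I_n, h)$ equals $v_{(\lfloor n/2 \rfloor + 1, \dots, n)}(h)$.

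Finally, multiplying the three restrictions and invoking the definitional identity $\mathfrak f_n = \mathfrak f_{n-1} \cdot v_{(\lfloor n/2 \rfloor + 1, \dots, n)}$ (which is immediate from the product formula for $\mathfrak f_r$), I arrive at $(\mathfrak f_n \mathfrak g_{n-1})^{p-1}$ up to a nonzero constant, as claimed. No genuine obstacle arises; the only substantive step is the bookkeeping that identifies the abstract tensor construction (\ref{id2}) with an explicit product of functions on $G \times G$, and this is forced by the compatibility of the product map on global sections with pointwise evaluation.
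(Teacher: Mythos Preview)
Your proof is correct and follows essentially the same approach as the paper's own proof: use Lemma \ref{restrict} for the $\eta_1$ and $\eta_2$ factors, and Lemma \ref{eval} applied to $v_{(1,2,\dots,n)}$ for the middle factor $\eta_{\lfloor n/2\rfloor,\lceil n/2\rceil}(1)$, then multiply and invoke the recursion $\mathfrak f_n=\mathfrak f_{n-1}\cdot v_{(\lfloor n/2\rfloor+1,\dots,n)}$. The paper states this more tersely, but the content is identical.
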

\begin{proof}
By Lemma \ref{restrict} we only have use Lemma \ref{eval} to observe 
that the restriction of the image of 
$$\eta_{ \lfloor \frac{n}{2} \rfloor, \lceil \frac{n}{2} \rceil}
: k  \rightarrow 
{\rm H}^0 \big (\omega_{\lfloor \frac{n}{2} \rfloor} \big) \otimes 
{\rm H}^0 \big (\omega_{\lceil \frac{n}{2} \rceil} \big),
$$
to $\{ I_n \}  \times U^- $ is generated by 
$  v_{(\lfloor \frac{n}{2} \rfloor +1, \dots, n)}.$
\end{proof}

\subsection{Frobenius splitting}
We may now prove 

\begin{thm}
\label {thm1}
The $G$-equivariant map $\eta$ defines a canonical Frobenius 
splitting of $\nicefrac{G}{B} \times \nicefrac{G}{B}$.
\end{thm}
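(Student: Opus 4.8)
The map $\eta$ is already known (Lemma \ref{mult} together with Lemma \ref{com}) to land in $\mathrm{End}_F(\nicefrac{G}{B}\times\nicefrac{G}{B},\mathrm{diag}(\nicefrac{G}{B}))^{(1)}$, and it is manifestly $G$-equivariant by construction. So by the definition of a canonical Frobenius splitting, the only thing left to check is that the image of $\eta$ contains an honest Frobenius splitting of $\nicefrac{G}{B}\times\nicefrac{G}{B}$. I will exhibit such a splitting as the image $\mathfrak f$ of the specific vector $v_-\otimes v_-$, where $v_-$ is a lowest weight vector of $\mathrm{St}$.

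\textbf{Reduction to an open cell.} By the discussion in Section \ref{evx} and Section \ref{connection}, it suffices to check the splitting property on a dense open subset of $\nicefrac{G}{B}\times\nicefrac{G}{B}$. The natural choice is the image of $U^-\times U^-$ under the quotient map $G\times G\to \nicefrac{G}{B}\times\nicefrac{G}{B}$, an open cell isomorphic to $\mathbb A^{2\binom n2}$. Translating by $G$ if necessary, and using $G$-equivariance of $\eta$, it is in fact enough to examine $\mathfrak f$ on the subset $\{I_n\}\times U^-$, since the left $G$-action lets us move the first coordinate freely; more precisely one wants the coefficient computation to be valid at the base point $(eB,eB)$, and equivariance reduces that to the restriction to $\{I_n\}\times U^-$. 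Here Proposition \ref{residual1} identifies that restriction, up to a nonzero constant, with $(\mathfrak f_n\mathfrak g_{n-1})^{p-1}$.

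\textbf{Applying residual normal crossing.} By Proposition \ref{res4} there is a total order on the full set of variables $Z_{i,j}$, $1\le j<i\le n$, relative to which $\mathfrak f_n\mathfrak g_{n-1}$ is a residual normal crossing. Proposition \ref{fres} then shows that the monomial $\prod_{i>j}Z_{i,j}^{\,p-1}$ appears in $(\mathfrak f_n\mathfrak g_{n-1})^{p-1}$ with nonzero coefficient. Since the $Z_{i,j}$ form a system of local coordinates on the cell, this is exactly the statement that the evaluation map $\mathrm{ev}$ of Section \ref{evx} does not vanish on $\mathfrak f$ (one must keep track that the coordinates on $\{I_n\}\times U^-$ extend to coordinates on the full cell $U^-\times U^-$, and that the extra $U^-$-directions contribute no obstruction because $\mathfrak f_n\mathfrak g_{n-1}$ does not involve those variables — cf. Lemma \ref{subst}). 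Hence $\mathrm{ev}_{\nicefrac{G}{B}\times\nicefrac{G}{B}}(\mathfrak f)\ne 0$, so $\mathfrak f$ is a nonzero multiple of a Frobenius splitting of $\nicefrac{G}{B}\times\nicefrac{G}{B}$, and therefore lies in the image of $\eta$. This completes the proof.

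\textbf{The main obstacle.} The genuinely delicate point is the bookkeeping that passes from the restriction on $\{I_n\}\times U^-$ (where Proposition \ref{residual1} lives) to the full open cell $U^-\times U^-$ and to the evaluation at the base point: one must be sure that restricting to $\{I_n\}$ in the first factor does not discard the coordinate directions needed to detect the top monomial, and that the product $\mathfrak f_n\mathfrak g_{n-1}$ being a residual normal crossing in all $\binom n2$ variables $Z_{i,j}$ (Proposition \ref{res4}) is the right statement — this is why the construction of $\eta$ was split into the two pieces $\eta_1,\eta_2$ matching $\mathfrak f_{n-1}$ and $\mathfrak g_{n-1}$, with the middle factor $\eta_{\lfloor n/2\rfloor,\lceil n/2\rceil}$ supplying the extra $v_{(\lfloor n/2\rfloor+1,\dots,n)}$ that upgrades $\mathfrak f_{n-1}$ to $\mathfrak f_n$. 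Once this compatibility is in place, everything else is a direct appeal to the cited results.
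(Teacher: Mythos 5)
Your argument breaks at the reduction to $\{I_n\}\times U^-$, and in fact the element you propose as the splitting cannot work. The evaluation of $\mathfrak f$ on $\nicefrac{G}{B}\times\nicefrac{G}{B}$ is computed in a full system of $2\binom{n}{2}$ local coordinates on the cell $U^-\times U^-$: writing $Z'_{i,j}$ and $Z_{i,j}$ for the coordinates of the first and second copy, one needs the monomial $\prod_{i>j}(Z'_{i,j})^{p-1}\prod_{i>j}Z_{i,j}^{p-1}$ to occur in $\mathfrak f|_{U^-\times U^-}$. Restricting to $\{I_n\}\times U^-$ sets every $Z'_{i,j}$ to zero, so Proposition \ref{residual1} only records the part of $\mathfrak f$ of degree \emph{zero} in the first-factor variables and says nothing about the coefficient you need, which requires degree $p-1$ in each of them; your remark that the extra directions ``contribute no obstruction because $\mathfrak f_n\mathfrak g_{n-1}$ does not involve those variables'' is exactly backwards, since a splitting must involve every local coordinate to the power $p-1$. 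Equivariance does not repair this: translating by $g$ turns $\eta(v_-\otimes v_-)$ into $\eta(g\cdot(v_-\otimes v_-))$, a different element of the image, so it cannot be used to ``move the first coordinate freely'' while studying the fixed section $\mathfrak f$. In fact $\mathfrak f=\eta(v_-\otimes v_-)$ is a weight vector of nonzero weight $-2(p-1)\rho$ for the diagonal torus, while the evaluation map is equivariant into the trivial module (up to the Frobenius twist of scalars); hence ${\rm ev}(\mathfrak f)=0$ and $\mathfrak f$ is not a Frobenius splitting, not even up to a constant. Consistently with this, the Remark following Theorem \ref{thm1} identifies the concrete splitting of \cite{LT} with $\eta(v_+\otimes v_-)$, a mixture of highest and lowest weight vectors, not with $\eta(v_-\otimes v_-)$.

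The paper uses $v_-\otimes v_-$ in a different way: it composes (\ref{id2}) with the projection $p_{2(p-1)\rho}$ onto the lowest weight space in the first factor (i.e.\ evaluation of the first-factor function at $I_n$) and with the evaluation map ${\rm ev}$ on the second factor only. The local computation you carried out --- Proposition \ref{residual1}, Proposition \ref{res4} and Section \ref{connection} --- is precisely what shows this composite $\phi$ is nonzero on $v_-\otimes v_-$, because there only the $\binom{n}{2}$ coordinates of a single copy of $U^-$ are relevant. The missing representation-theoretic step is then essential: by Frobenius reciprocity the map $(1\otimes{\rm ev})\circ(\ref{id2}):{\rm St}\otimes{\rm St}\to{\rm H}^0(2(p-1)\rho)$ is unique up to scalar, hence (being nonzero) a nonzero multiple of the surjective multiplication map; choosing $x$ whose image under it is a splitting of $\nicefrac{G}{B}$ gives $({\rm ev}\otimes{\rm ev})\big((\ref{id2})(x)\big)\neq 0$, so some element of the image of $\eta$ --- but not $\eta(v_-\otimes v_-)$ itself --- is a splitting of $\nicefrac{G}{B}\times\nicefrac{G}{B}$. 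So you have proved the right local nonvanishing but attached it to the wrong global statement, and the detour through uniqueness and surjectivity cannot be bypassed by evaluating $\eta(v_-\otimes v_-)$ directly.
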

\begin{proof}
Consider the commutative diagram
\begin{equation}
\label{dia1}
\xymatrix{
{\rm St} \otimes {\rm St} \ar[d]_{(\ref{id2})} \ar[rrd]^{\phi} & & \\
{\rm H}^0(2(p-1) \rho) \otimes {\rm H}^0(2(p-1) \rho)
\ar[d]_(.5){ {p}_{ 2(p-1)\rho} \otimes 1} \ar[rr] & & k_{2(1-p)\rho}\\
k_{2(1-p)\rho} \otimes {\rm H}^0(2(p-1) \rho) 
\ar[urr]_{1 \otimes { \rm ev}}
& & 
}
\end{equation}
induced by the projection $p_{ 2(p-1)\rho}$ onto the 
lowest weight space and the  $G$-equivariant evaluation map  
$$ {\rm ev} :  {\rm H}^0( 2(p-1)\rho) = 
{\rm End}_F\big(\nicefrac{G}{B}\big)^{(1)} \rightarrow
k.$$
We claim that $\phi(v_- \otimes v_-)$ is nonzero. 
To see this let $1 \otimes s$ denote the image of
$v_- \otimes v_-$ under the composed vertical map 
in (\ref{dia1}). We have to prove that ${\rm ev}(s)$
is nonzero. This can be checked locally on $U^-$ 
(considered as an open subset of $\nicefrac{G}{B}$)
where it follows  by Proposition \ref{residual1}, 
Proposition \ref{res4} and the
discussion in Section \ref{connection}.

By construction $\phi$
is the composition of the $G$-equivariant map
\begin{equation}
\label{phi1}
{\rm St } \otimes {\rm St} \xrightarrow{(\ref{id2})} 
 {\rm H}^0( 2(p-1)\rho) \otimes {\rm H}^0( 2(p-1)\rho)
\xrightarrow{1 \otimes {\rm ev} } {\rm H}^0( 2(p-1)\rho),
\end{equation}
with the projection map $p_{ 2(p-1)\rho}$ . Moreover, 
by Frobenius reciprocity the composed map (\ref{phi1}) is determined 
up to a  constant. In particular, if $\phi$ is nonzero
then (\ref{phi1}) must coincide with the multiplication 
map 
$$ {\rm H}^0( (p-1)\rho)  \otimes {\rm H}^0( (p-1)\rho) 
\rightarrow {\rm H}^0( 2(p-1)\rho),$$
up to a nonzero constant. As a consequence (\ref{phi1})
is surjective and thus it contains a Frobenius 
splitting of $\nicefrac{G}{B}$ in its image. Hence
(\ref{id2}), and thus also  $\eta$, must contain a Frobenius 
splitting in its image.
\end{proof}

\begin{remark}

The canonical Frobenius splitting $\eta$ is related 
to the concrete Frobenius splitting of $\nicefrac{G}{B} \times 
\nicefrac{G}{B}$ described in Section 4 of \cite{LT}.
More precisely, in [Sect.4, loc.cit] one defines for $g, h \in 
G$ a new matrix $M(g,h)$ of size $2n \times 2n$. 
Next one considers the principal minors $\delta_i(M(g,h))$ 
of size $i$ (from the lower left hand corner) which defines
sections of 
$$
{\rm H}^0 \big (\omega_{\lceil \frac{i}{2} \rceil} \big) \otimes 
{\rm H}^0 \big (\omega_{\lfloor \frac{i}{2} \rfloor} \big).
$$
The concrete Frobenius splitting of  $\nicefrac{G}{B} \times 
\nicefrac{G}{B}$ considered in [loc.cit] is then defined as 
the $(p-1)$-th power of the product 
$$ \prod_{i=1}^{2n-1} \delta_i(M(g,h)).$$
One may check that the elements $\delta_i(M(g,h))$ are 
connected to the above setup in the following way : 
when $1 \leq i \leq n$ the element  $\delta_i(M(g,h))$ generates the
image of the highest weight space under the map
$$\eta_{ \lceil \frac{i}{2} \rceil, \lfloor \frac{i}{2} \rfloor} :
 {\rm H}^0 \big (\omega_{i} \big) \rightarrow 
{\rm H}^0 \big (\omega_{\lceil \frac{i}{2} \rceil} \big) \otimes 
{\rm H}^0 \big (\omega_{\lfloor \frac{i}{2} \rfloor} \big).$$
When $n < i \leq 2n-1$  the element  $\delta_i(M(g,h))$ 
generates the image of the lowest weight space 
under the map
$$\eta_{ \lceil \frac{i}{2} \rceil, \lfloor \frac{i}{2} \rfloor} :
 {\rm H}^0 \big (\omega_{i-n} \big) \rightarrow 
{\rm H}^0 \big (\omega_{\lceil \frac{i}{2} \rceil} \big) \otimes 
{\rm H}^0 \big (\omega_{\lfloor \frac{i}{2} \rfloor} \big).$$
As a consequence, the concrete splitting in [loc.cit] coincide,
up to a flipping of the factors of  $\nicefrac{G}{B} \times 
\nicefrac{G}{B}$, with $\eta({v_+ \otimes v_-})$ for some
highest weight vector $v^+$ of ${\rm St}$. In particular,
 $\eta({v_+ \otimes v_-})$ is compatible with 
with subvarieties of the form $X \times X$ 
for $X$ a Kempf variety in $\nicefrac{G}{B}$.
\end{remark}

\section{The symplectic case}
  
In this section we will construct a canonical Frobenius 
splitting 
$$ \overline{\eta} :{\overline{\rm St}} \otimes
 {\overline{\rm St}}
\rightarrow {\rm End}_F\big(\nicefrac{\overline G}{\overline B}
\times \nicefrac{\overline G}{\overline B}, {\rm diag}(\nicefrac{\overline G}{\overline B})
 \big)^{(1)} .$$
of $\nicefrac{\overline G}{\overline B}
\times \nicefrac{\overline G}{\overline B}$ where any element in 
the image vanishes with multiplicity at least $(p-1) {\rm dim}(
\nicefrac{\overline G}{\overline B})$ along the diagonal.
We start by constructing a certain residual normal crossing on 
$\overline U^-$.

\subsection{The maps $\overline \eta_{i,j}$}
Notice  that the restrictions $\overline{\omega}_i$ and
$\overline{\omega}_{n-i}$ of the fundamental characters
$\omega_i$ and $\omega_{n-i}$
to $\overline{T}$  coincide. It follows that 
the $G$-equivariant maps $\eta_{i,j}$ in 
(\ref{embedding}) induces similar maps in the symplectic
case. E.g when $1 \leq i,j \leq 2m$ and $i+j \leq 2m$ we
obtain, by restriction, an $\overline G$-equivariant morphism
$$ \overline{\eta}_{i,j} : {\rm H}^0(\omega_{i+j}) 
\rightarrow {\rm H}^0 (\overline{\omega}_i ) 
\otimes {\rm H}^0 (\overline{\omega}_j ) .$$
Elements in the image of $\overline{\eta}_{i,j}$ 
may be considered  as functions on $\overline G \times \overline G$. 
Actually these functions are, by definition, restrictions 
of certain functions on $G \times G$ which were described 
in Section \ref{vanish}. In particular,  by applying Lemma 
\ref{lemma1} and Lemma \ref{lemma2-1}, we may obtain 
a lower bound on the vanishing 
multiplicity along the diagonal in $\overline G \times \overline G$; e.g. with the bound $i+j \leq 2m$
the multiplicity is at least ${\rm min}(i,j)$.

\subsection{Vanishing multiplicities}

As in the case of $\eta$ the map $\overline{\eta}$ 
will be constructed using certain tensorproducts 
of the $\overline{\eta}_{i,j}$. In this
case we consider the product of the maps
$$ \overline{\eta}_{{\lfloor \frac{s}{2} \rfloor} ,
{\lceil \frac{s}{2} \rceil}} : {\rm H}^0 \big(
\omega_s \big) \rightarrow 
{\rm H}^0 \big ( \overline{\omega}_{\lfloor \frac{s}{2} \rfloor} \big) \otimes 
{\rm H}^0 \big (\overline{\omega}_{\lceil \frac{s}{2} \rceil} \big) ,
$$
for $s=1, 2, \dots, 2m$, and 
$$  \overline{\eta}_{m ,0} : {\rm H}^0 \big(
\omega_m \big) \rightarrow 
{\rm H}^0 \big ( \overline{\omega}_{m} \big) \otimes k .$$
This leads to an $\overline G$-equivariant morphism (remember
$\omega_{2m}$ is the trivial character)
\begin{equation}
\label{map}
{\rm H}^0 \big(\omega_m \big) \otimes 
 \bigotimes_{s=1}^{2m-1} 
{\rm H}^0 \big(\omega_s \big)
\rightarrow 
\bigotimes_{s=1}^m {\rm H}^0 \big(
 \overline{\omega}_s \big)^{\otimes 2} \otimes 
\bigotimes_{s=1}^m {\rm H}^0 \big(
 \overline{\omega}_s \big)^{\otimes 2},
\end{equation}
The vanishing multiplicity along the diagonal in
$\overline G \times \overline G$ for elements in the image of (\ref{map}),
is then described by the following result.

\begin{lem}
\label{mult2}
Every element in the image of (\ref{map}) vanishes with 
multiplicity at least $m^2={\rm dim}(\nicefrac{\overline G}{\overline B})$
along the diagonal in $\overline G \times \overline G$. 
\end{lem}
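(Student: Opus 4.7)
The plan is to compute the vanishing multiplicity factor-by-factor and add up contributions, exploiting the fact that vanishing multiplicities along a subvariety add under products of functions. The target of (\ref{map}) is naturally grouped into two halves, one for each factor of $\overline G \times \overline G$: the first $\bigotimes_{s=1}^m H^0(\overline \omega_s)^{\otimes 2}$ is identified (after multiplying tensor factors within it) with a space of functions on the first copy of $\overline G$, and similarly for the second half. Under this identification, the element obtained from a tensor product of maps $\overline \eta_{i,j}$ is nothing but the pointwise product (on $\overline G \times \overline G$) of the individual functions produced by each $\overline \eta_{i,j}$, where for each $\overline \eta_{i,j}$ the first output factor is read as a function on the first $\overline G$ and the second as a function on the second $\overline G$.

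With this setup, I would establish the individual bounds. By construction, the image of each $\overline \eta_{i,j}$ is the restriction to $\overline G \times \overline G$ of the image of $\eta_{i,j}$, so Lemma \ref{lemma1} (in the regime $i+j\le n=2m$) provides a lower bound of $\min(i,j)$ on the vanishing multiplicity along $\operatorname{diag}(G)$. Restricting to $\overline G \times \overline G$ can only increase multiplicity along $\operatorname{diag}(\overline G)=\operatorname{diag}(G)\cap(\overline G\times \overline G)$, because the ideal sheaf of $\operatorname{diag}(\overline G)$ in $\overline G\times \overline G$ contains the restriction of the ideal sheaf of $\operatorname{diag}(G)$. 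Hence each $\overline \eta_{\lfloor s/2\rfloor,\lceil s/2\rceil}$ contributes at least $\lfloor s/2\rfloor$, and $\overline \eta_{m,0}$ contributes at least $0$.

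It then remains to add the contributions. The map (\ref{map}) is built out of $\overline \eta_{\lfloor s/2\rfloor,\lceil s/2\rceil}$ for $s=1,\dots,2m-1$, the extra $\overline \eta_{m,0}$ applied to the separate $H^0(\omega_m)$-factor, and (implicitly, using $H^0(\omega_{2m})=k$) the map $\overline \eta_{m,m}$ producing the remaining two copies of $H^0(\overline \omega_m)$. Summing the lower bounds gives
\[
\sum_{s=1}^{2m}\bigl\lfloor s/2 \bigr\rfloor \;+\; 0 \;=\; 2\bigl(1+2+\cdots+(m-1)\bigr)+m \;=\; m(m-1)+m \;=\; m^2,
\]
which equals $\dim(\nicefrac{\overline G}{\overline B})$, the number of positive roots of type $C_m$. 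This is exactly the bound claimed.

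The main obstacle is essentially bookkeeping: one has to be careful that the pairing of output tensor factors with the two copies of $\overline G$ is consistent across all the individual $\overline \eta_{i,j}$'s, so that the image of (\ref{map}), viewed as a function on $\overline G\times\overline G$, really is the product of the individual functions; and one has to check that all $4m$ factors of the target arise (four copies of $H^0(\overline \omega_s)$ for $s<m$, and four of $H^0(\overline \omega_m)$ — the latter assembled from $s=2m-1$, $s=2m$, and $\overline \eta_{m,0}$). Once this combinatorial accounting is verified, the arithmetic in the display above closes the argument.
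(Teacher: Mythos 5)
Your proposal is correct and follows essentially the same route as the paper: the paper's proof invokes Lemma \ref{lemma1} together with the identity $\sum_{s=1}^{2m}\lfloor s/2\rfloor = m^2$, and your write-up simply fills in the intermediate bookkeeping (that multiplicities add under pointwise products, that restricting from $G\times G$ to $\overline G\times\overline G$ cannot decrease the order of vanishing along $\operatorname{diag}(\overline G)$, and that $\overline\eta_{m,0}$ contributes zero), all of which the paper states or implicitly uses in the remarks preceding Lemma \ref{mult2}.
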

\begin{proof}
Applying Lemma \ref{lemma1} it  suffices to observe the
formula 
$$ \sum_{s=1}^{2m}{  \lfloor \frac{s}{2} \rfloor}=m^2.$$
This ends the proof.
\end{proof}

\subsection{The construction of $\overline \eta$}

We now want to construct the canonical Frobenius splitting
$\overline \eta$. For this we first need to combine (\ref{map}) with the following 
lemma. 

\begin{lem} 
\label{stein2}
There exist nonzero $\overline G$-equivariant maps
\begin{equation}
\label{st1}
{\overline{\rm St}} \rightarrow 
\bigotimes_{s=1}^{m} {\rm H}^0 \big(\omega_s \big)^{\otimes (p-1)},
\end{equation}
and 
\begin{equation}
\label{st2}
{\overline{\rm St}} \rightarrow 
\bigotimes_{s=m}^{2m-1} {\rm H}^0 \big(\omega_s \big)^{\otimes (p-1)},
\end{equation}
both of them uniquely defined up to nonzero constants.
\end{lem}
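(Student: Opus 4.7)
The plan is to adapt the proof of the analogous $G$-statement (\ref{prod2}): using self-duality of both $\overline{\rm St}$ and of the target, reduce to Frobenius reciprocity and a weight-multiplicity computation.

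The essential preliminary is that each factor ${\rm H}^0(\omega_s)$, $1 \le s \le 2m-1$, is self-dual as an $\overline G$-module. Via the identification (\ref{iso1}), ${\rm H}^0(\omega_s) \simeq \wedge^s V^*$. The symplectic form on $V$ yields an $\overline G$-equivariant isomorphism $V^* \simeq V$, hence $\wedge^s V^* \simeq \wedge^s V$. Its $G$-dual ${\rm H}^0(\omega_s)^* \simeq {\rm H}^0(\omega_{2m-s}) \simeq \wedge^{2m-s}V^*$ is identified with $\wedge^s V$ via the pairing into the trivial $\overline G$-module $\wedge^{2m}V^* = k$, yielding ${\rm H}^0(\omega_s)^* \simeq {\rm H}^0(\omega_s)$ as $\overline G$-modules. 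Consequently both targets of (\ref{st1}) and (\ref{st2}) are self-dual $\overline G$-modules.

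Denote either target by $M$. By self-duality of $M$ and of $\overline{\rm St}$, nonzero $\overline G$-maps $\overline{\rm St} \to M$ correspond bijectively to nonzero $\overline G$-maps $M \to \overline{\rm St}$; the latter, via Frobenius reciprocity and self-duality of $M$, are identified with the $\overline U$-invariant subspace of $M$ of $\overline T$-weight $(p-1)\sum_s \overline\omega_s$. To verify this is one-dimensional, I would observe that this weight is the highest $\overline T$-weight of $M$: each factor ${\rm H}^0(\omega_s)$ has highest $\overline G$-weight $\overline\omega_s$ for $s\le m$, respectively $\overline\omega_{2m-s}=\overline\omega_s$ for $s\ge m$, and its highest weight space is one-dimensional---by a direct inspection of the basis $\{v_{\bf a}\}$ of Section \ref{fund}, the identifications $\overline\e_i + \overline\e_{2m+1-i}=0$ force the unique highest-weight basis vector to be $v_{(2m+1-s,\dots,2m)}$. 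Hence the highest weight space of $M$ is one-dimensional (automatically $\overline U$-invariant as $M$ has no higher weights), proving uniqueness.

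For existence I would construct an explicit nonzero element of $\Hom_{\overline G}(M, \overline{\rm St})$ as the composition of the tensor product of the restrictions (\ref{RES}) ${\rm H}^0(\omega_s) \to {\rm H}^0(\overline\omega_s)$ with the surjective multiplication $\bigotimes_s {\rm H}^0(\overline\omega_s)^{\otimes(p-1)} \twoheadrightarrow \overline{\rm St}$ (surjective by the central surjectivity fact for dominant weights); for (\ref{st2}) one re-indexes using $\overline\omega_{2m-s}=\overline\omega_s$. The main technical obstacle will be the weight computation establishing one-dimensionality of the highest $\overline G$-weight space of each ${\rm H}^0(\omega_s)$; the rest of the argument is a formal translation of the $G$-case, requiring no complete reducibility.
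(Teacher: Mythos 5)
Your proposal is correct and follows essentially the same strategy as the paper: dualize (using $\overline{\rm St}^*\simeq\overline{\rm St}$), reduce via Frobenius reciprocity to a one-dimensional weight-space computation, and observe $\overline\omega_{n-i}=\overline\omega_i$. The only cosmetic differences are that you dualize the target using the symplectic form (making each factor ${\rm H}^0(\omega_s)$ $\overline G$-self-dual) and then argue at the \emph{highest} $\overline T$-weight, whereas the paper keeps the $G$-module duality ${\rm H}^0(\omega_s)^*\simeq{\rm H}^0(\omega_{n-s})$ and argues at the \emph{lowest} $\overline T$-weight, where the needed multiplicity-one statement is already recorded in Section~\ref{restrict2} (via ``positive roots of $G$ restrict to nonzero positive roots of $\overline G$''); quoting that remark (in dual form) would be cleaner than your by-hand inspection of the basis $\{v_{\bf a}\}$, and would also properly justify your parenthetical that the highest-weight vector is automatically $\overline U$-invariant. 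Your explicit existence construction is sound but redundant once the Frobenius-reciprocity space is shown to be exactly one-dimensional.
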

\begin{proof}
By dualizing the picture and using the selfduality of 
${\overline{\rm St}}$ we, first of all,  have to show the existence 
and uniqueness, up to constants, of  a nonzero $\overline G$-equivariant map
\begin{equation}
\label{Map}
\bigotimes_{s=1}^{m} {\rm H}^0 \big(\omega_{n-s} \big)^{\otimes (p-1)}
\rightarrow 
{\overline{\rm St}} = {\rm Ind}_{\overline B}^{\overline G}\big(-
\sum_{s=1}^m  (p-1)\overline{\omega}_i \big).
\end{equation}
As $\overline{\o_i}= \overline{\o_{n-i}}$, for 
$i=1,2,\dots,n-1$, 
this follows from Frobenius reciprocity and the remarks
in Section \ref{restrict2}.
The second part of 
the statement follows in the same way.
\end{proof}

Lemma \ref{stein2} enables us to compose the $(p-1)$-th 
tensorpower of  (\ref{map})  with the two morphisms in 
Lemma \ref{stein2}. This way we obtain an $\overline G$-equivariant 
morphism
\begin{equation}
\label{map2}
{\overline{\rm St}} \otimes {\overline{\rm St}}
\rightarrow 
\bigotimes_{s=1}^m {\rm H}^0 \big(
 \overline{\omega}_s \big)^{\otimes 2(p-1)} \otimes 
\bigotimes_{s=1}^m {\rm H}^0 \big(
 \overline{\omega}_s \big)^{\otimes 2(p-1)}.
\end{equation}
Actually as the left hand side of (\ref{map}) contains
two copies of the module ${\rm H}^0 \big( 
\omega_m \big)$ there is some ambiguity about 
the definition of (\ref{map2}). There are several
natural ways to construct (\ref{map2}) and 
all of them work equally well in the following.  
However,  to be precise
we fix the setup in such a way that the map 
(\ref{st2}) is associated to the copies of   
${\rm H}^0 \big( \omega_m \big)$ coming
from the map $ \overline{\eta}_{m ,0}$.

Composing (\ref{map2}) with the product 
morphism on each tensorfactor we 
next obtain the $\overline{G}$-equivariant
map 
\begin{equation}
\label{idd2}
{\overline{\rm St}} \otimes {\overline{\rm St}}
\rightarrow  {\rm H}^0 \big( \sum_{s=1}^m
2(p-1) \overline{\omega}_s \big) \otimes 
 {\rm H}^0 \big( \sum_{s=1}^m
2(p-1) \overline{\omega}_s \big),
\end{equation}
which by the relation 
$$  {\rm H}^0 \big( \sum_{s=1}^m
2(p-1) \overline{\omega}_s \big) \simeq 
 {\rm End}_F\big(\nicefrac{\overline G}{\overline B}
 \big)^{(1)} ,$$
then defines 
the  $\overline G$-equivariant  map of primary interest 
$$ \overline{\eta} :{\overline{\rm St}} \otimes
 {\overline{\rm St}}
\rightarrow {\rm End}_F\big(\nicefrac{\overline G}{\overline B}
\times \nicefrac{\overline G}{\overline B}
 \big)^{(1)} .$$

\subsection{A residual normal crossing}

Observe that an element $g=(g_{i,j})$ in $\overline U^-$ 
is uniquely determined by the entries $g_{i,j}$ for 
$1 \leq j < i \leq 2m$ and $ i+j \leq n+1$. This follows 
directly from the relation (\ref{rel1})
satisfied by elements in ${\overline G}$. 
In particular, we may identify
$\overline U^-$ with affine $ m^2$-space 
${\mathbb A}^{m^2}$ through the map
\begin{equation}
\label{ide1} 
\overline  U^- \rightarrow \mathbb A^{m^2},
\end{equation}
$$ g \mapsto (g_{i,j})_{\stackrel{1 \leq j < i \leq 2m}{i+j \leq n+1}},$$
where we, for convenience, have indexed the 
coordinates in $\mathbb A^{m^2}$ by the 
set of pairs $(i,j)$ satisfying  
$1 \leq j < i \leq 2m$ and $ i+j \leq n+1$.
The coordinate ring of ${\mathbb A}^{m^2}$ 
is then identified with $k[z_{i,j}]$ accordingly. 
Using the isomorphism  (\ref{ide1}) we also 
consider the coordinate ring  $\overline U^-$
as  $k[z_{i,j}]$.

Recall that we in Section \ref{residual} introduced
a collection $\mathfrak f_r$, $1 \leq r \leq n$, of
polynomial functions on $U^{-}$. In the following
$\overline{\mathfrak f}_r$, $1 \leq r \leq n$, will
denote their restriction to 
 $\overline{U}^-$.

\begin{prop}
\label{ressymp}
The polynomial 
$\overline{\mathfrak f}_r \in k[z_{i,j}]$  
is a residual normal crossing relative
to the set of variables 
$$ \{ z_{i,j} :  1 \leq j < i \leq n, ~ i+j \leq r+1 \},$$
which are ordered using the order : $ z_{i,j} \leq
z_{i',j'}$ if either $i+j < i'+j'$ or  $i+j = i' + j'$
and $i \leq i'$.   
\end{prop}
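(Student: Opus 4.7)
The plan is to deduce the statement from its analogue on $U^{-}$ (Lemma~\ref{res1}) by a substitution argument using Lemma~\ref{subst}. The starting point is the identification $\overline{U}^{-}\simeq \mathbb{A}^{m^{2}}$ in (\ref{ide1}), which says that the entries $g_{i,j}$ with $1\le j<i\le n$ and $i+j\le n+1$ are free coordinates on $\overline{U}^{-}$, while the remaining entries $g_{i,j}$ with $i+j>n+1$ are determined from them by the symplectic relation (\ref{rel1}). Thus the restriction homomorphism $k[U^{-}]\twoheadrightarrow k[\overline{U}^{-}]$ sends $Z_{i,j}\mapsto z_{i,j}$ when $i+j\le n+1$, and sends $Z_{i,j}$ to an explicit polynomial $\tilde Z_{i,j}\in k[z_{k,l}:k+l\le n+1]$ when $i+j>n+1$.

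By Lemma~\ref{res1}, the polynomial $\mathfrak{f}_{r}\in k[Z_{i,j}:1\le j<i\le n]$ is a residual normal crossing relative to $J_{1}=\{Z_{i,j}:i+j\le r+1\}$ with the total order described in the statement. Since $r\le n$, we have $r+1\le n+1$, so every element of $J_{1}$ is a free coordinate on $\overline{U}^{-}$; equivalently, each variable $Z_{i,j}$ with $i+j>n+1$ lies outside $J_{1}$.

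Next I would list the variables $Z_{i,j}$ with $i+j>n+1$ in any order and substitute them, one at a time, by the corresponding polynomial $\tilde Z_{i,j}$ in the free coordinates. Each of these substitutions falls under Lemma~\ref{subst}, because the substituted variable does not belong to $J_{1}$, and the substituting polynomial is allowed to be arbitrary. Hence the residual normal crossing property relative to $J_{1}$ is preserved through every such substitution. After all substitutions have been performed, the resulting polynomial lies in $k[z_{i,j}:i+j\le n+1]$ and, by construction, equals $\overline{\mathfrak{f}}_{r}$. Identifying $Z_{i,j}$ with $z_{i,j}$ for $i+j\le n+1$ turns $J_{1}$ into the set $\{z_{i,j}:1\le j<i\le n,\ i+j\le r+1\}$, equipped with exactly the order in the statement.

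The only genuine point to check is that the restriction of $\mathfrak{f}_{r}$ to $\overline{U}^{-}$ is indeed obtained by the substitution scheme above, i.e.\ that the non-free entries $g_{i,j}$ for $i+j>n+1$ really are polynomial functions of the free entries. This is where the concrete parametrisation (\ref{ide1}) enters, and I expect this to be the only nontrivial obstacle; it is, however, exactly the content of the paragraph preceding the proposition, which observes that (\ref{rel1}) determines the remaining entries uniquely, so the projection $\overline{U}^{-}\to \mathbb{A}^{m^{2}}$ to the free coordinates is an isomorphism of varieties. Once this is granted, the argument reduces to the mechanical iteration of Lemma~\ref{subst} described above.
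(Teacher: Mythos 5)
Your proof is correct and follows the same route as the paper: both express the restriction homomorphism $k[Z_{i,j}]\to k[z_{i,j}]$ as the identity on the free coordinates ($i+j\le n+1$) and a substitution on the remaining ones, observe that for $r\le n$ the ordered set $J_1$ consists entirely of free coordinates, and then iterate Lemma~\ref{subst} starting from Lemma~\ref{res1}. The paper's proof is just a terser version of exactly this argument.
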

\begin{proof}
Recall the notation $k[Z_{i,j}]$, $1 \leq j < i \leq n$, 
for the coordinate ring of $U^-$ in Section \ref{residual}.
On the level of coordinate rings the inclusion $\overline U^- 
\subset U^-$ then takes the form
\begin{equation}
\label{restr1}
k[Z_{i,j}] \rightarrow k[z_{i,j}],
\end{equation}
where
$$ Z_{i,j} \mapsto
\begin{cases}
z_{i,j} & \text{if $ i+j  \leq n+1$,} \\
f_{i,j} & \text{else,} \\
\end{cases}
$$
for certain elements $f_{i,j}$ in $k[z_{i,j}]$. Now
 we may apply Lemma \ref{res1} and 
Lemma \ref{subst} to end the proof.
\end{proof}

\subsection{Frobenius splitting}

Let $\overline{v}_-$ denote a lowest weight vector of 
 $\overline{\rm St}$. Before proving the main result 
we note  the following result 

\begin{lem}
\label{residual3}
Let $\overline{\mathfrak f}$ denote the image of
$\overline v_- \otimes \overline v_-$ under the 
map (\ref{idd2}). Then the restriction of 
$\overline{\mathfrak f}$ to $\{ I_n \} \times
\overline U^-$ coincide with $(\overline{\mathfrak
f}_n)^{p-1}$.
\end{lem}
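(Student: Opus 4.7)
The proof will follow the blueprint of Lemma \ref{restrict} and Proposition \ref{residual1}, with the two morphisms (\ref{st1}) and (\ref{st2}) from Lemma \ref{stein2} playing the roles of $\eta_1$ and $\eta_2$. The plan is first to pin down where (\ref{st1}) and (\ref{st2}) send $\overline v_-$, then to apply the $(p-1)$-th tensor power of (\ref{map}) factor by factor using Lemma \ref{eval}, and finally to multiply out in (\ref{idd2}) to recognise $\overline{\mathfrak f}_n$.

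First I would argue that, up to a nonzero constant, both (\ref{st1}) and (\ref{st2}) send $\overline v_-$ to the tensor product of the lowest weight vectors $v_{(1, 2, \dots, s)}$ of the respective factors $H^0(\omega_s)$. The $\overline T$-weight of $\overline v_-$ is $-(p-1) \sum_{s=1}^m \overline\omega_s$, and by the observation in Section \ref{restrict2} every other $\overline T$-weight of each $H^0(\omega_s)$ differs from $-\overline\omega_s$ by a nonzero nonnegative sum of positive roots of $\overline G$. Consequently the $\overline T$-weight space of $-(p-1) \sum_s \overline\omega_s$ in the target of (\ref{st1}) (or (\ref{st2})) is one-dimensional and spanned by the claimed tensor of lowest weights, and the $\overline T$-equivariance of the two morphisms determines the images.

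Next I would apply the $(p-1)$-th tensor power of (\ref{map}) to this tensor of lowest weights and evaluate at $\{I_n\} \times \overline U^-$ factorwise. Lemma \ref{eval}, whose statement concerns only the $G$-modules $H^0(\omega_i) \otimes H^0(\omega_j)$, transfers verbatim to $\overline{\eta}_{i,j}$: for each $s = 1, \dots, 2m$ the restriction of $\overline{\eta}_{\lfloor s/2 \rfloor, \lceil s/2 \rceil}(v_{(1, \dots, s)})$ to $\{I_n\} \times \overline U^-$ equals $1 \otimes v_{(\lfloor s/2 \rfloor + 1, \dots, s)}$. In particular the case $s = 2m$ is the map $\overline{\eta}_{m, m} : H^0(\omega_{2m}) = k \to H^0(\overline\omega_m)^{\otimes 2}$, whose image restricts to $1 \otimes v_{(m+1, \dots, 2m)}$. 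The auxiliary map $\overline{\eta}_{m, 0}(v_{(1, \dots, m)}) = v_{(1, \dots, m)} \otimes 1$ evaluates at $(I_n, h)$ to the constant $1$. Multiplying on each tensor factor of (\ref{idd2}), the restriction of $\overline{\mathfrak f}$ to $\{I_n\} \times \overline U^-$ becomes, up to a nonzero constant, the $(p-1)$-th power of $\prod_{s=1}^{2m} v_{(\lfloor s/2 \rfloor + 1, \dots, s)}$, which by the definition of $\mathfrak f_n$ from Section \ref{residual} is exactly $(\overline{\mathfrak f}_n)^{p-1}$.

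The main obstacle I anticipate is the bookkeeping of how the two tensor factors of each $\overline{\eta}_{i,j}$ output are distributed between the two copies of $\nicefrac{\overline G}{\overline B}$ in the target of (\ref{idd2}). The convention fixed in the paragraph preceding (\ref{idd2}), associating (\ref{st2}) to the copy of $H^0(\omega_m)$ feeding into $\overline{\eta}_{m, 0}$, must be tracked carefully so as to verify that the factors $v_{(\lfloor s/2 \rfloor + 1, \dots, s)}$ all collect on the $\overline U^-$-side while the complementary lowest weights $v_{(1, \dots, \lfloor s/2 \rfloor)}$ collapse to $1$ on the $\{I_n\}$-side; once this is settled, the counting argument goes through as above.
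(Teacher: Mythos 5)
Your proposal is correct and follows essentially the same route as the paper's own proof: first observe that (\ref{st1}) and (\ref{st2}) send $\overline v_-$ into the tensor product of the lowest weight vectors of the various ${\rm H}^0(\omega_s)$, then apply Lemma \ref{eval} factorwise to read off the restriction to $\{I_n\}\times\overline U^-$, and finally multiply out to recognize $\overline{\mathfrak f}_n^{\,p-1}$. The paper simply asserts the first step, while you supply the justifying weight argument (using irreducibility of $\overline{\rm St}$ and the remark in Section \ref{restrict2}) and track the $\overline\eta_{m,0}$ factor explicitly; these are sound elaborations rather than deviations.
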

\begin{proof}
Notice, first of all, that the maps (\ref{st1})  and (\ref{st2}) 
will map $\overline v_-$ into the tensorproduct of the lowest 
weight spaces of  the various ${\rm H}^0\big(
 \omega_s\big)$. Thus by Lemma \ref{eval} and the
construction of (\ref{idd2}), the restriction of 
$\overline{\mathfrak f}$ to $\{ I_n \} \times
\overline U^-$ will  coincide with the product
of the $(p-1)$-th powers of 
$$  \overline v_{(\lfloor \frac{s}{2} \rfloor +1, \dots, s)},$$
for $s=1,2, \dots, 2m$. Here  $\overline 
v_{(\lfloor \frac{s}{2} \rfloor +1, \dots, s)}$ denotes the
restriction of $ v_{(\lfloor \frac{s}{2} \rfloor +1, \dots, s)}$
to $\overline U^-$.
The latter product is by definition equal to
$(\overline{\mathfrak f}_{n})^{p-1} $.

\end{proof}

\begin{thm}
\label{thm2}
The map $\overline{\eta}$ defines a canonical Frobenius splitting
of $\nicefrac{\overline G}{\overline B}
\times \nicefrac{\overline G}{\overline B}$. Moreover, 
every element in the image of $\overline{\eta}$ vanishes
with multiplicty $(p-1) {\rm dim} (\nicefrac{\overline G}{\overline B})$
along the diagonal. In particular, we may consider $\overline{\eta}$ 
as a map
\begin{equation}
\label{etasymp}
\overline{\eta} :{\overline{\rm St}} \otimes
 {\overline{\rm St}}
\rightarrow {\rm End}_F\big(
\nicefrac{\overline G}{\overline B}
\times \nicefrac{\overline G}{\overline B}
,
{\rm diag}(\nicefrac{\overline G}{\overline B}) \big) ^{(1)},
\end{equation}
where any Frobenius splitting in the image vanishes
with maximal multiplicity along the diagonal.
\end{thm}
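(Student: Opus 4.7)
The plan is to mirror the proof of Theorem \ref{thm1}. The vanishing multiplicity assertion is almost automatic: Lemma \ref{mult2} shows that every element in the image of (\ref{map}) vanishes to order at least $m^2$ along the diagonal, so its $(p-1)$-th tensor power used to build (\ref{idd2}) vanishes to order at least $(p-1)m^2=(p-1)\dim(\nicefrac{\overline G}{\overline B})$ along the diagonal. Combined with Lemma \ref{com} this forces $\overline\eta$ to factor through ${\rm End}_F(\nicefrac{\overline G}{\overline B}\times\nicefrac{\overline G}{\overline B},{\rm diag}(\nicefrac{\overline G}{\overline B}))^{(1)}$, and Proposition \ref{maxmult} ensures that any Frobenius splitting in the image vanishes with maximal multiplicity. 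So everything reduces to exhibiting a genuine Frobenius splitting in the image of $\overline\eta$.

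For this I would set up a diagram exactly analogous to (\ref{dia1}): post-compose (\ref{idd2}) with the projection $p_{2(p-1)\sum\overline\omega_s}\otimes 1$ onto the lowest weight line of the first tensor factor, and then with $1\otimes\overline{\rm ev}$, yielding an $\overline G$-equivariant map
\begin{equation*}
\overline\phi:\overline{\rm St}\otimes\overline{\rm St}\longrightarrow k_{-2(p-1)\sum\overline\omega_s}.
\end{equation*}
The central claim is $\overline\phi(\overline v_-\otimes\overline v_-)\neq 0$. Denoting by $1\otimes\overline s$ the image of $\overline v_-\otimes\overline v_-$ after the first projection, this reduces to showing $\overline{\rm ev}(\overline s)\neq 0$, which by Section \ref{evx} may be verified on the open cell $\overline U^-\subset\nicefrac{\overline G}{\overline B}$. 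Lemma \ref{residual3} identifies $\overline s|_{\overline U^-}$ with $(\overline{\mathfrak f}_n)^{p-1}$ up to a nonzero scalar, Proposition \ref{ressymp} (with $r=n$) shows $\overline{\mathfrak f}_n$ to be a residual normal crossing relative to the \emph{entire} coordinate set on $\overline U^-$, and Proposition \ref{fres} then yields a nonzero coefficient in front of $\prod z_{i,j}^{p-1}$ in $(\overline{\mathfrak f}_n)^{p-1}$. The local description in Section \ref{connection} then gives $\overline{\rm ev}(\overline s)\neq 0$.

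Once $\overline\phi(\overline v_-\otimes\overline v_-)\neq 0$, the composition $(1\otimes\overline{\rm ev})\circ(\ref{idd2})$ is a nonzero $\overline G$-equivariant map $\overline{\rm St}\otimes\overline{\rm St}\to{\rm H}^0(2(p-1)\sum\overline\omega_s)$. By Frobenius reciprocity the space of such maps is one-dimensional, because the lowest weight $-2(p-1)\sum\overline\omega_s$ occurs with multiplicity one in $\overline{\rm St}\otimes\overline{\rm St}$, and this space is spanned by the multiplication map $\overline{\rm St}\otimes\overline{\rm St}\to{\rm H}^0(2(p-1)\sum\overline\omega_s)$, which is surjective since both tensor factors are induced from a dominant weight. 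Hence the composition itself is surjective, its image contains a Frobenius splitting of $\nicefrac{\overline G}{\overline B}$, and therefore the image of (\ref{idd2}) contains a Frobenius splitting of $\nicefrac{\overline G}{\overline B}\times\nicefrac{\overline G}{\overline B}$, completing the proof. The most delicate point in the plan is that in contrast with the $\mathrm{SL}_n$-case, where one needed the combined residual normal crossing $\mathfrak f_n\mathfrak g_{n-1}$ to sweep out all coordinate directions, here Proposition \ref{ressymp} asserts that $\overline{\mathfrak f}_n$ alone already suffices on $\overline U^-$; verifying that this symplectic asymmetry is compatible with the specific routing of the middle factor through $\overline\eta_{m,0}$ (and that the map (\ref{idd2}) really evaluates to $(\overline{\mathfrak f}_n)^{p-1}$ and not a product involving a second piece) is where care is required, but this bookkeeping is already absorbed into Lemma \ref{residual3}.
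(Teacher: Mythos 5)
Your proposal is correct and follows essentially the same route as the paper: reduce the nonvanishing of the evaluation to a local computation on $\overline U^-$ via the diagram analogous to (\ref{dia1}), invoke Lemma \ref{residual3} and Proposition \ref{ressymp} together with Proposition \ref{fres} and the discussion in Section \ref{connection} to show $\overline{\rm ev}(\overline s)\neq 0$, and then use the Frobenius reciprocity/surjectivity argument to produce a splitting in the image, while the multiplicity claim is handled directly by Lemma \ref{mult2}. The only difference is that you spell out in full the steps that the paper compresses into the phrase ``we proceed as in the proof of Theorem \ref{thm1}.''
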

\begin{proof}
By Lemma \ref{mult2} we only have to prove that $\overline{\eta}$
contains a Frobenius splitting in its image. For this we proceed as in the proof
of Theorem \ref{thm1}. Start by fixing a lowest weight vector $\overline{v}_-$  
in $\overline{\rm St}$ and consider the diagram ($\mu$ denoting $\sum_{s=1}^m2 (p-1) 
\overline{\omega}_s$)
\begin{equation}
\label{dia2}
\xymatrix{
{\overline{\rm St}} \otimes \overline{{\rm St}} \ar[d]_{(\ref{idd2})} \ar[rrd]^{\overline \phi} & & \\
{\rm H}^0(\mu) \otimes {\rm H}^0(\mu)
\ar[d]_(.5){ {p}_{ \mu} \otimes 1} \ar[rr] & & k_{-\mu}\\
k_{-\mu} \otimes {\rm H}^0(\mu) 
\ar[urr]_{1 \otimes { \overline {\rm ev}}}
& & 
}
\end{equation}
induced by the projection $p_{\mu}$ onto the 
lowest weight space and the  $\overline G$-equivariant 
evaluation map  
$$ {\overline{\rm ev}} :  {\rm H}^0( \mu) = 
{\rm End}_F\big(\nicefrac{\overline G}{\overline B}\big)^{(1)} \rightarrow
k.$$
As in the proof of Theorem \ref{thm1} it suffices to 
prove that $\overline \phi(\overline v_- \otimes \overline v_-)$ 
is nonzero. To prove this let $1 \otimes \overline s$ denote the 
image of $\overline v_- \otimes \overline v_-$ under the 
composed vertical map in (\ref{dia2}). We have to prove 
that ${\overline{\rm ev}}(\overline s)$
is nonzero. This can be checked locally on $\overline U^-$ 
(considered as an open subset of $\nicefrac{\overline G}{
\overline B}$)
where it follows  by Lemma \ref{residual3}, 
Proposition \ref{ressymp} and the
discussion in Section \ref{connection}.

\end{proof}

\bibliographystyle{amsalpha}

\end{document}